\numberwithin{equation}{section}
\theoremstyle{plain}
\newtheorem{thm}{Theorem}[section]
\newtheorem{lem}[thm]{Lemma}
\newtheorem{prop}[thm]{Proposition}
\newtheorem{cor}[thm]{Corollary}
\theoremstyle{definition}
\newtheorem{example}[thm]{Example}
\newcommand{\Z}{\mathbb{Z}}
\def\Aut{\operatorname{Aut}}
\def\PGL{\operatorname{PGL}}
\begin{document}
\title[On Pseudo-real finite subgroups in $\PGL_3(\mathbb{C})$]{On pseudo-real finite subgroups of \boldmath{$\PGL_3(\mathbb{C})$}}
\author[E. Badr] {E. Badr}
\address{$\bullet$\,\,Eslam Badr}
\address{Mathematics Department,
Faculty of Science, Cairo University, Giza-Egypt}
\email{eslam@sci.cu.edu.eg}
\address{Mathematics and Actuarial Science Department (MACT), American University in Cairo (AUC), New Cairo-Egypt}
\email{eslammath@aucegypt.edu}

\author[A. El-Guindy] {A. El-Guindy}
\address{$\bullet$\,\,Ahmad El-Guindy}
\address{Mathematics Department,
Faculty of Science, Cairo University, Giza, Egypt}
\email{aelguindy@sci.cu.edu.eg}


\keywords{Projective linear groups; Field of moduli; Fields of definitions; Pseudo-real; Smooth plane curves; Automorphism groups}

\subjclass[2020]{20G20, 14L35, 14H37, 22F50.}

\maketitle
\begin{abstract}
Let $G$ be a finite subgroup of $\PGL_3(\mathbb{C})$, and let $\sigma$ be the generator of $\operatorname{Gal}(\mathbb{C}/\mathbb{R})$. We say that $G$ has a \emph{real field of moduli} if $^{\sigma}G$ and $G$ are $\PGL_3(\mathbb{C})$-conjugates. Furthermore, we say that $\mathbb{R}$ is \emph{a field of definition for $G$} or that \emph{$G$ is definable over $\mathbb{R}$} if $G$ is $\PGL_3(\mathbb{C})$-conjugate to some $G'\subset\PGL_3(\mathbb{R})$. In this situation, we call $G'$ \emph{a model for $G$ over $\mathbb{R}$}. On the other hand, if $G$ has a real field of moduli but is not definable over $\mathbb{R}$, then we call $G$ \emph{pseudo-real}.

In this paper, we first show that any finite cyclic subgroup $G=\Z/n\Z$ in $\operatorname{PGL}_3(\mathbb{C})$ has {a real field of moduli} and we provide a necessary and sufficient condition for $G=\Z/n\Z$ to be definable over $\mathbb{R}$; see Theorems \ref{cyclicp2}, \ref{cor12v}, and \ref{cyclicp1v31}. We also prove that any dihedral group $\operatorname{D}_{2n}$ with $n\geq3$ in $\operatorname{PGL}_3(\mathbb{C})$ is definable over  $\mathbb{R}$; see Theorem \ref{dihedralv1}. Furthermore, we study all other classes of finite subgroups of $\operatorname{PGL}_3(\mathbb{C})$, and show that all of them except $\operatorname{A}_4,\,\operatorname{A}_5$ and $\operatorname{S}_4$ are pseudo-real; see Theorems \ref{hessianthm} and \ref{typeCorD}. Finally, we explore the connection of these notions in group theory with their analogues in arithmetic geometry; see Theorem \ref{connection} and Example \ref{exam1v1}. As a result, we can say that if $G$ is definable over $\mathbb{R}$, then its Jordan constant $J(G)=1,2,3, 6$ or $60$.
\end{abstract}
\tableofcontents
\section{Introduction}
The projective general linear group over the complex numbers $\operatorname{PGL}_3(\mathbb{C})$ is widely studied in several branches of mathematics for many reasons. Some of these motivations come from algebraic geometry, arithmetic geometry, and also from group theory. We give some examples of such motivations.

(1) In complex algebraic geometry,  $\operatorname{PGL}_3(\mathbb{C})$ can be viewed as the automorphism group $\Aut(\mathbb{P}^2(\mathbb{C}))$ of the complex projective plane $\mathbb{P}^2(\mathbb{C})$, see \cite[Example 7.1.1]{Har} for example. Moreover, any isomorphism between two smooth complex plane curves $C$ and $C'$ of a fixed degree $d\geq4$ is induced by an element of $\operatorname{PGL}_3(\mathbb{C})$, see \cite[Theorem 1]{Cha78}. For such a curve we have the finiteness result $|\Aut(C)|<+\infty$ due to Hurwitz \cite{Hur92}, hence we can view $\Aut(C)$ as a finite subgroup of $\operatorname{PGL}_3(\mathbb{C})$ acting on a non-singular plane model $F(X,Y,Z)=0$ for $C$ inside $\mathbb{P}^2(\mathbb{C})$.
It is thus natural to classify finite subgroups $G$ in $\operatorname{PGL}_3(\mathbb{C})$. Based on geometrical methods, Mitchell \cite{Mit} achieved such classification. Recently, Harui \cite{Harui} made Mitchell's classification more precise under the assumption that $G=\Aut(C)$ for some smooth plane curve $C$. However, some of these groups live in a short exact sequence, hence group extension problems arise, which can sometimes be hard to solve.

Another parallel line of research is to obtain the stratification of $\mathbb{C}$-isomorphism classes of smooth plane curves of a fixed degree $d$ by their automorphism groups.  Henn in his PhD dissertation \cite{Henn} and Komiya-Kuribayashi \cite{Kur1} accomplished this task for smooth quartic curves ($d=4$), Badr-Bars \cite{MR3508302, Badr2, Badr3} for smooth quinitcs ($d=5$) and for smooth sextics ($d=6$).

(2) In complex arithmetic geometry, the problem of studying fields of definition versus fields of moduli for a Riemann surface $\mathcal{S}$ has attracted a lot of recent research.  For example, we refer to \cite{Artebani, Artebani1, BadrHidalog, Earle, Hidalgo1, Hidalgo3, Hidalgo2, Hugg1, Kontogeorgis}.

More precisely, a subfield $K$ of $\mathbb{C}$ is called \emph{a field of definition for $\mathcal{S}$} if there exists a model
of $\mathcal{S}$ defined by polynomials with coefficients in $K$. \emph{The field of moduli for $\mathcal{S}$} is the intersection of all fields of definition for $\mathcal{S}$. The work of Koizumi \cite{Koizumi} guarantee the existence of a model for $\mathcal{S}$ over a finite extension of its field of moduli. In this direction, the surface $\mathcal{S}$ is said to be \emph{pseudo-real} if its field of moduli is a subfield of $\mathbb{R}$, but $\mathcal{S}$ does not have $\mathbb{R}$ as a field of definition.

The above aspects from algebraic geometry and arithmetic geometry are the main motivation for us to extend the notions of fields of definition, fields of moduli, pseudo-real, to the study of arithmetic groups. Indeed, there has been other instances in which it has been fruitful to translate concepts from arithmetic geometry  to group theory, as we illustrate next.

(3) In group theory, we can measure to which extent an infinite group $\Gamma$ is similar to an abelian group by computing its \emph{Jordan constant}, denoted by $J(\Gamma)$. It is defined to be the smallest positive integer such that any finite subgroup of $\Gamma$ has an abelian normal
subgroup with index not exceeding $J(\Gamma)$. This definition originated
from the theory of abelian varieties, more specifically, \cite[Definition 2.1]{Popov}.

Concerning the Jordan constant $J(\operatorname{PGL}_3(K))$, where $K$ is a field of characteristic $0$, Hu \cite{YHu} showed that it assumes only one of the values: $360,\,168,\,60,\,24,\,12,\,6$, depending on certain conditions satisfied by the field $K$.
In particular, $J(\operatorname{PGL}_3(\mathbb{C}))=360$, see \cite[Theorem 1.2]{YHu} for full details. Also, we will see in this paper that a finite subgroups $G$ in $\operatorname{PGL}_3(\mathbb{C})$ is definable over $\mathbb{R}$ only if $J(G)=1,2,3,6$ or $60$.

\vspace{0.2cm}
\noindent\textbf{Notation.} Throughout the paper, we use the following notations.

\begin{itemize}
  \item $\operatorname{Norm}(G,\PGL_3(\mathbb{C}))$ is the normalizer of $G$ inside $\PGL_3(\mathbb{C})$,
  \item $\zeta_n=e^{\frac{2\pi i}{n}}$, a fixed primitive $n$th root of unity in $\mathbb{C}$.

  \item We shall view $\mathbb{C}^\times$ as a subgroup of $\operatorname{GL}_3(\mathbb{C})$ by identifying $c\in \mathbb{C^\times}$ with $\operatorname{diag}(c,c,c)$. If $A$ is in $\operatorname{GL}_3(\mathbb{C})$, we let $\pi(A)$ denote its image under the canonical projection onto $\PGL_3(\mathbb{C})$, namely $\pi(A)$ is the coset (or equivalence class) $\mathbb{C}^\times A$. To ease notation, we occasionally continue to use $A$ in place of $\pi(A)$ when the context is clear.

  \item If $A=(a_{i,j}) \in \operatorname {GL}_3(\mathbb{C})$, then the projective linear transformation $\pi(A)\in\PGL_3(K)$ is sometimes written as
$$[a_{1,1}X+a_{1,2}Y+a_{1,3}Z:a_{2,1}X+a_{2,2}Y+a_{2,3}Z:a_{3,1}X+a_{3,2}Y+a_{3,3}Z].$$
According to \cite{Mit} (see also \cite{Harui}), an \emph{homology} of period $n$ is a projective linear transformation of the plane $\mathbb{P}^2(\mathbb{C})$, which is $\operatorname{PGL}_3(\mathbb{C})$-conjugate to
$\operatorname{diag}(1,1,\zeta_{n})$. Such a transformation fixes point-wise a projective line $\mathcal{L}$, its axis, and a point $P\in\mathbb{P}^2(\mathbb{C})-\mathcal{L}$, its center. In its canonical form, the line is $\mathcal{L}:Z=0$ and the point is $P=(0:0:1)$. Otherwise, it is a \emph{non-homology}.

\item The Galois group $\operatorname{Gal}(\mathbb{C}/\mathbb{R})$ action on $\operatorname{PGL}_3(\mathbb{C})$ is a left action, denoted by $^{\sigma}\phi$ for any $\phi\in\operatorname{PGL}_3(\mathbb{C})$.

\item For $c\in\mathbb{C}$, $\Re(c)$ and $\Im(c)$ denote the real and the imaginary parts of $c$ respectively, and $|c|$ denotes the absolute value of $c$.
\item We use the formal GAP library notations ``GAP$(n, m)$`` to refer the finite group of order $n$ that appears
in the $m$-th position of the atlas for small finite groups \cite{Gap}. See also \href{https://people.maths.bris.ac.uk/~matyd/GroupNames/index500.html}{GroupNames} \cite{groupnames}.
\item We use the terms “definable over $\mathbb{R}$", “descends to $\mathbb{R}$", and “$\mathbb{R}$ is a field of definition" interchangeably.
\item By $\mathcal{G}_9$ we mean the group generated by $E_{3,1,-1}:=\operatorname{diag}(1,\zeta_3,\zeta_3^{-1})$ and $T:=[Y:X:Z]$ . Clearly, $\mathcal{G}_9$ is a $(\Z/3\Z)^2$ in $\operatorname{PGL}_3(\mathbb{C})$.

By $\mathcal{G}_{12}$ we mean the group generated by $T,\,E_{2,1,0}:=\operatorname{diag}(1,-1,1)$ and $E_{2,0,1}:=\operatorname{diag}(1,1,-1)$. The group generated by $\mathcal{G}_{12}$ and $R:=[X:Z:Y]$ is denoted by $\mathcal{G}_{24}$. Viewing $\mathcal{G}_{12}$ and $\mathcal{G}_{24}$ as automorphism subgroups of the Fermat's quartic curve $\mathcal{F}_{4}\,:\,X^4+Y^4+Z^4=0$ whose full automorphism group equals $\operatorname{GAP}(96,64)=(\Z/4\Z)^2\rtimes\operatorname{S}_3$, we easily identify $\mathcal{G}_{12}$ (resp. $\mathcal{G}_{24}$) with a copy of $\operatorname{A}_4$ (resp. $\operatorname{S}_4$) inside $\operatorname{PGL}_3(\mathbb{R})$.
\item Following the notations in \cite[Chp. 2]{Hugg1}, a type $\mathfrak{C}$-group $G$ is a finite subgroup generated in $\operatorname{PGL}_3(\mathbb{C})$ by $T$ and finitely many diagonal elements of the form $\operatorname{diag}(1,\zeta_n^a,\zeta_n^b)$ for some $n,a,b$. For example, $\mathcal{G}_9$ and $\mathcal{G}_{12}$ are type $\mathfrak{C}$-groups.

A type $\mathfrak{D}$-group $G$ is a subgroup generated in $\operatorname{PGL}_3(\mathbb{C})$ by $R:=[X:Z:Y]$ and a group of type $\mathfrak{C}$.

A type $\mathfrak{P}$-group $G$ is one of the finite (primitive) subgroups in $\operatorname{PGL}_3(\mathbb{C})$ namely, the Hessian groups $\operatorname{Hess}_*$, for $*=216,\,72,\,36$, the alternating groups $\operatorname{A}_*$, for $*=5,\,6$, and the Klein group $\operatorname{PSL}(2,7)$ of order $168$.

The subgroup of all \emph{intransitive} elements, i.e. elements of the shape
    $$\left(
                                              \begin{array}{ccc}
                                                 1 & 0 & 0\\
                                                0 & \ast & \ast\\
                                                0 & \ast & \ast \\
                                              \end{array}
                                            \right),$$
is denoted by $\operatorname{PBD}(2,1)$. The natural group homomorphism $$\left(
                                              \begin{array}{ccc}
                                                 1 & 0 & 0\\
                                                0 & \ast & \ast\\
                                                0 & \ast & \ast \\
                                              \end{array}
                                            \right)\in\operatorname{PBD}(2,1)\mapsto\left(
                                                                                                     \begin{array}{cc}
                                                                                                       \ast & \ast \\
                                                                                                       \ast& \ast \\
                                                                                                     \end{array}
                                                                                                   \right)\in \operatorname{PGL}_2(K)$$
                                                                                                   is denoted by $\Lambda.$

A type $\mathfrak{I}$-group is a finite subgroup of $\operatorname{PBD}(2,1)$.
\end{itemize}

\vspace{0.5cm}
We remark that the finite subgroups of $\operatorname{PGL}_3(\mathbb{C})$ are classified in \cite[Chap. VII]{Miller} (see also \cite[Lemma 2.3.7]{Hugg1}). More precisely, we have that:
\begin{thm}\label{shorter2.3.7}
Up to $\operatorname{PGL}_3(\mathbb{C})$-conjugation, a finite subgroup in $\operatorname{PGL}_3(\mathbb{C})$ is one of the types $\mathfrak{I}$, $\mathfrak{C}$, $\mathfrak{D}$ and $\mathfrak{P}$. Moreover, each type $\mathfrak{P}$-group admits a single conjugacy class in $\operatorname{PGL}_3(\mathbb{C})$, where a representation of each class is provided in section \ref{typePgroups}.
\end{thm}

Obviously, we always can view any type $\mathfrak{I}$-group $G$ as a finite subgroup in $\operatorname{PGL}_2(\mathbb{C})$ through the homomorphism $\Lambda$. On the other hand, the classification of finite subgroups in $\operatorname{PGL}_2(K)$, where $K$ is any field (not necessarily algebraically closed) of characteristic $0$, is known due to A. Beauville \cite{Beauville}. In particular, the question about definability over $\mathbb{R}$ for groups of type $\mathfrak{J}$-groups was addressed. This in turn motivates us to only consider finite subgroups $G$ in $\operatorname{PGL}_3(\mathbb{C})$ of one of the types $\mathfrak{C}$, $\mathfrak{D}$ and $\mathfrak{P}$ and to investigate the property of being pseudo-real for each of them.
\section{Main results}
Let $G\subset\PGL_3(\mathbb{C})$ be cyclic of order $1<n< +\infty$. Up to $\PGL_3(\mathbb{C})$-conjugation, such $G$ is generated by a diagonal element $A:=\operatorname{diag}(1,\zeta_n^a,\zeta_n^b)$,
for some $0\leq a < b\leq n-1$ such that $\operatorname{gcd}(a,b)=1$.
\begin{thm}\label{cyclicp2}
Let $G=\langle A\rangle\subset\PGL_3(\mathbb{C})$ be a cyclic group of order $n$ as above. Then, we have that
\begin{enumerate}[(1)]
\item {$G$ always has a real field of moduli}.
\item $\mathbb{R}$ is a field of definition for $G$ if and only if $A$ and $A^{-1}$ are conjugates via a transformation of the shape $\phi\,\,^{\sigma}\phi^{-1}$ for some $\phi\in\PGL_3(\mathbb{C})$. In this situation, $\phi^{-1}\,G\,\phi$ would give a model for $G$ over $\mathbb{R}$.
\end{enumerate}
\end{thm}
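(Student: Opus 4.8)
The plan for part (1) is to observe that it holds essentially by inspection. The $\Gal(\C/\R)$-action on $\PGL_3(\C)$ is induced by entrywise complex conjugation, so $^{\sigma}A=\operatorname{diag}(1,\zeta_n^{-a},\zeta_n^{-b})=A^{-1}$ already in $\GL_3(\C)$, hence also as an element of $\PGL_3(\C)$. Since $\sigma$ acts by a group automorphism, $^{\sigma}G=\langle\,^{\sigma}A\,\rangle=\langle A^{-1}\rangle=G$; thus $^{\sigma}G$ and $G$ are in fact equal, and taking $\phi=\operatorname{id}$ shows $G$ has a real field of moduli, with no further work.

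For part (2) I would argue by Galois descent. The one genuinely non-formal ingredient — and the step one has to be careful about — is the standard descent fact that $B\in\PGL_3(\C)$ satisfies $^{\sigma}B=B$ if and only if $B\in\PGL_3(\R)$: writing $B=\pi(M)$ with $M\in\GL_3(\C)$, the relation $^{\sigma}M=cM$ together with $\sigma^2=\operatorname{id}$ forces $|c|=1$, and after rescaling $M$ by a square root of $c$ one gets a representative with real entries — this clearing of the scalar ambiguity is exactly Hilbert's Theorem~90 for $\C^{\times}$. Granting this, and noting that since $G=\langle A\rangle$ and $\PGL_3(\R)$ is a subgroup we have $\psi^{-1}G\psi\subset\PGL_3(\R)$ if and only if $\psi^{-1}A\psi\in\PGL_3(\R)$, i.e. if and only if $^{\sigma}(\psi^{-1}A\psi)=\psi^{-1}A\psi$, the statement reduces to a short computation.

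First I would expand, using $^{\sigma}A=A^{-1}$ from part (1),
\[
{}^{\sigma}(\psi^{-1}A\psi)={}^{\sigma}\psi^{-1}\cdot{}^{\sigma}A\cdot{}^{\sigma}\psi={}^{\sigma}\psi^{-1}\cdot A^{-1}\cdot{}^{\sigma}\psi,
\]
so that $\psi^{-1}A\psi\in\PGL_3(\R)$ is equivalent to $g\,A^{-1}\,g^{-1}=A$ with $g:={}^{\sigma}\psi\,\psi^{-1}$. Putting $\phi:={}^{\sigma}\psi$ (so that $\psi={}^{\sigma}\phi$) rewrites $g$ in the asserted shape $g=\phi\,{}^{\sigma}\phi^{-1}$; and since $^{\sigma}G=G$ we have $\psi^{-1}G\psi=({}^{\sigma}\phi)^{-1}\,G\,{}^{\sigma}\phi={}^{\sigma}(\phi^{-1}G\phi)$, so applying $\sigma$ once more (and using ${}^{\sigma}\PGL_3(\R)=\PGL_3(\R)$) gives $\phi^{-1}G\phi\subset\PGL_3(\R)$ as well. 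This settles the ``only if'' direction and identifies $\phi^{-1}G\phi$ as the claimed model over $\R$. For the converse, starting from $\phi$ with $(\phi\,{}^{\sigma}\phi^{-1})\,A^{-1}\,(\phi\,{}^{\sigma}\phi^{-1})^{-1}=A$, I would note that conjugation by $g=\phi\,{}^{\sigma}\phi^{-1}$ then interchanges $A$ and $A^{-1}$, hence so does conjugation by $g^{-1}={}^{\sigma}\phi\,\phi^{-1}$, and read the display backwards to obtain $^{\sigma}(\phi^{-1}A\phi)=\phi^{-1}A\phi$, i.e. $\phi^{-1}G\phi\subset\PGL_3(\R)$. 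Apart from the Hilbert~90 input, everything here is bookkeeping — keeping straight the roles of $\psi$, of $\phi={}^{\sigma}\psi$, and of their $\sigma$-conjugates, with the cocycle identity $(\phi\,{}^{\sigma}\phi^{-1})\cdot{}^{\sigma}(\phi\,{}^{\sigma}\phi^{-1})=1$ serving as a consistency check.
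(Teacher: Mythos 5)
Your proof is correct and follows essentially the same route as the paper: part (1) is the same observation that $^{\sigma}A=A^{-1}$ generates the same group, and part (2) is the same manipulation of $\phi^{-1}A\phi={}^{\sigma}(\phi^{-1}A\phi)={}^{\sigma}\phi^{-1}\,A^{-1}\,{}^{\sigma}\phi$ read in two ways (up to the harmless relabelling $\phi\leftrightarrow{}^{\sigma}\psi$). The only substantive difference is that you explicitly justify, via Hilbert~90 for $\C^{\times}$, the descent fact that $^{\sigma}B=B$ implies $B\in\PGL_3(\R)$, a step the paper uses without comment.
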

\begin{proof}
See section \S \ref{cyclic}, page 5.
\end{proof}

In particular, we have:
\begin{thm}\label{cor12v}
Let $G=\langle A\rangle\subset\PGL_3(\mathbb{C})$ be a cyclic group of order $n$ as above. Then, there exists a model for $G$ over $\mathbb{R}$ if and only if $n=2$ or $n>2$ such that $a+b,\,a-2b$ or $2a-b$ equals $0\,\operatorname{mod}\,n$. In particular, any cyclic group generated by a homology of period $n\geq3$ is pseudo-real.

Furthermore, we can get a model for $G$ over $\mathbb{R}$ generated by
$$
\phi^{-1}\,A\,\phi=\left(
                                                      \begin{array}{ccc}
                                                       2\Im(\alpha\,\overline{\beta}) & 0 & 0 \\
                                                        0 & 2\Im(\alpha\,\overline{\beta}\,\zeta_n^a) & 2|\beta|^2\,\operatorname{sin}(2\pi a/n) \\
                                                        0 & -2|\alpha|^2\,\operatorname{sin}(2\pi a/n) & 2\Im(\alpha\,\overline{\beta}\,\zeta_n^{-a}) \\
                                                      \end{array}
            \right)$$
            for some $\alpha,\,\beta\in\mathbb{C}^*$
\end{thm}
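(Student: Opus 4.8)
The plan is to translate ``$G$ is definable over $\mathbb{R}$'' into a statement about eigenvalue multisets via Theorem~\ref{cyclicp2}, resolve that combinatorially, and then exhibit the real model by a single explicit conjugation.

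\emph{The ``only if'' direction.} Suppose $\mathbb{R}$ is a field of definition for $G=\langle A\rangle$. By Theorem~\ref{cyclicp2}(2) the elements $A$ and $A^{-1}$ are $\PGL_3(\mathbb{C})$-conjugate; only the plain conjugacy is used here, not the special shape $\phi\,{}^{\sigma}\phi^{-1}$. Since $A=\operatorname{diag}(1,\zeta_n^a,\zeta_n^b)$ and $A^{-1}=\operatorname{diag}(1,\zeta_n^{-a},\zeta_n^{-b})$ are diagonal, and two diagonal classes in $\PGL_3(\mathbb{C})$ are conjugate exactly when their eigenvalue multisets coincide up to a common scalar, there must be $c\in\Z/n\Z$ with $\{0,a,b\}\equiv\{c,\,c-a,\,c-b\}\pmod n$ as multisets; equivalently the $3$-element multiset $S=\{0,a,b\}\subset\Z/n\Z$ is stable under the reflection $x\mapsto c-x$. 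Now I argue combinatorially: this reflection has orbits of length at most $2$, and its fixed points are the solutions of $2x\equiv c$, so if $n$ is even and $c$ is odd it is fixed-point-free and cannot stabilise a multiset of odd size, while otherwise one element $x_0$ of $S$ is fixed and the remaining two form one orbit, forcing $S=\{x_0,\,x_0+d,\,x_0-d\}$. Running through the three positions of $0$ in $S$, namely $0=x_0$, $0=x_0+d$, $0=x_0-d$, and using $0\le a<b\le n-1$, each case yields that one of $a+b,\ a-2b,\ 2a-b$ is $\equiv 0\pmod n$; and for $n=2$ the normal form forces $a=0,\ b=1$, for which $a-2b\equiv 0\pmod 2$.

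\emph{The ``if'' direction and the explicit model.} Conversely, each of the three congruences puts $A$---after reordering the coordinates and rescaling, operations given by permutation and scalar matrices and hence harmless for definability over $\mathbb{R}$---into the shape $\operatorname{diag}(1,\zeta_n^{e},\zeta_n^{-e})$: indeed $a\equiv 2b$ gives $A=\operatorname{diag}(1,\zeta_n^{2b},\zeta_n^{b})\sim\operatorname{diag}(1,\zeta_n^{b},\zeta_n^{-b})$, symmetrically for $b\equiv 2a$, while $a+b\equiv 0$ is already of that shape; moreover $\gcd(a,b)=1$ forces $\gcd(e,n)=1$ in each instance. So it suffices to produce a real model when $A=\operatorname{diag}(1,\zeta_n^{a},\zeta_n^{-a})$ with $\gcd(a,n)=1$. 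For this put $\phi=\pi(M)$ with
$$M=\begin{pmatrix}1&0&0\\ 0&\alpha&\beta\\ 0&\bar\alpha&\bar\beta\end{pmatrix},\qquad \alpha,\beta\in\mathbb{C}^{*},\quad \alpha/\beta\notin\mathbb{R},$$
so that $\det M=\alpha\bar\beta-\bar\alpha\beta=2i\,\Im(\alpha\bar\beta)\neq 0$. Computing $M^{-1}AM$ directly, using $\overline{\zeta_n^{a}}=\zeta_n^{-a}$ and $\zeta_n^{a}-\zeta_n^{-a}=2i\sin(2\pi a/n)$, and then multiplying through by the nonzero real scalar $2\,\Im(\alpha\bar\beta)$ (harmless in $\PGL_3(\mathbb{C})$), reproduces exactly the matrix displayed in the statement. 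Every entry of that matrix is real, so $\phi^{-1}G\phi\subset\PGL_3(\mathbb{R})$ is a model for $G$ over $\mathbb{R}$; the condition $\alpha/\beta\notin\mathbb{R}$ is needed for invertibility, the displayed matrix having determinant $\big(2\,\Im(\alpha\bar\beta)\big)^{3}$. In the two reduced cases the same formula applies with $a$ replaced by the relevant exponent.

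\emph{The homology case, and the main difficulty.} A homology of period $n$ is $\PGL_3(\mathbb{C})$-conjugate to $\operatorname{diag}(1,1,\zeta_n)$, hence the cyclic group it generates is conjugate to $\langle\operatorname{diag}(1,1,\zeta_n)\rangle$, the normal form with $a=0,\ b=1$; for $n\ge 3$ none of $a+b=1$, $a-2b=-2$, $2a-b=-1$ vanishes modulo $n$, so by the first part $\mathbb{R}$ is not a field of definition, while $G$ has a real field of moduli by Theorem~\ref{cyclicp2}(1), whence $G$ is pseudo-real. I expect the main obstacle to be the combinatorial step---showing that the reflection-stable $3$-element multisets of $\Z/n\Z$ containing $0$ are governed precisely by the three congruences $a+b,\ a-2b,\ 2a-b\equiv 0\pmod n$---where one must treat the even-$n$ parity obstruction and the degenerate multisets in which $1,\zeta_n^a,\zeta_n^b$ fail to be pairwise distinct (exactly the homology situation); the matrix computation in the ``if'' direction is mechanical, but the projective normalisation must be tracked carefully to land on the displayed formula verbatim.
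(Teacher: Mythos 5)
Your proposal is correct and follows essentially the same route as the paper: necessity via comparing the eigenvalue multisets of $A$ and $A^{-1}$ up to a common scalar (Lemma \ref{lem1}), reduction of the congruences $a-2b\equiv 0$ and $2a-b\equiv 0$ to $a+b\equiv 0$ by coordinate permutations, and the identical explicit conjugating matrix $\phi$ with rows $(0,\alpha,\beta)$ and $(0,\bar\alpha,\bar\beta)$. Your reflection-on-$\Z/n\Z$ phrasing of the case analysis and your treatment of the homology case as the degenerate multiset $\{0,0,1\}$ are only cosmetic repackagings of the paper's three-way case split on the scalar $c\in\{1,\zeta_n^{-a},\zeta_n^{-b}\}$ and its separate homology argument.
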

\begin{proof}
See section \S \ref{cyclic}, page 6.
\end{proof}
The above results can be reformulated using characteristic polynomials of lifts to $B\in \operatorname{GL}_3(\mathbb{C})$. If we denote the characteristic polynomial of such $B$ by $f_B(t)$, then it is straightforward to see that for $c\in \mathbb{C}^*$
\begin{equation}\label{char1}
f_{cB}(t)=c^3f_B(t/c).
\end{equation}
So while we can not attach a single polynomial as a characteristic polynomial to an element $A\in \operatorname{PGL}_3(\mathbb{C})$, we can attach to such an $A$ an equivalence class of polynomials in $\mathbb{C}[t]$ coming from the action given by \eqref{char1}. Such classes are preserved under conjugation in $\operatorname{PGL}_3(\mathbb{C})$, and we can prove the following result.
\begin{cor}\label{cyclicp1v31}
A finite cyclic group $G$ of order $n\geq3$ is definable over  $\mathbb{R}$ if there exists $A \in \operatorname{GL}_3(\mathbb{C})$ such that $\pi(A)$ (the image of $A$ in $\operatorname{PGL}_3(\mathbb{C})$ under the natural projection) generates $G$ in $\operatorname{PGL}_3(\mathbb{C})$  and the  characteristic polynomial $f_A(t)\in \mathbb{R}[t]$.

The converse is not necessarily true.
 \end{cor}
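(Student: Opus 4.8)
The plan is to deduce the forward implication directly from Theorem \ref{cor12v} by translating the arithmetic conditions on $(a,b)$ into a statement about the characteristic polynomial of a suitable lift. Recall that $G$ is $\PGL_3(\mathbb{C})$-conjugate to $\langle\operatorname{diag}(1,\zeta_n^a,\zeta_n^b)\rangle$ with $0\le a<b\le n-1$ and $\gcd(a,b)=1$, and that conjugation in $\PGL_3(\mathbb{C})$ preserves the equivalence class of characteristic polynomials under the action \eqref{char1}. So first I would fix a lift $A\in\operatorname{GL}_3(\mathbb{C})$ of the chosen generator of $G$, write its eigenvalues as $c,\,c\zeta_n^a,\,c\zeta_n^b$ for some $c\in\mathbb{C}^*$, and observe that $f_A(t)\in\mathbb{R}[t]$ forces the multiset of roots $\{c,c\zeta_n^a,c\zeta_n^b\}$ to be stable under complex conjugation. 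Since all three roots have the same modulus $|c|$, this means $\overline{c}$ is again one of $c,c\zeta_n^a,c\zeta_n^b$; enumerating the ways the conjugation can permute the three eigenvalues, one checks that each possibility translates (after clearing the common factor $c$ and using $\overline{\zeta_n^k}=\zeta_n^{-k}$) into one of the congruences $a+b\equiv0$, $2a-b\equiv0$, or $a-2b\equiv0\pmod n$ (together with the degenerate cases where conjugation fixes all eigenvalues, which force $n\mid a$ and $n\mid b$, impossible here, or $n=2$). That is exactly the list appearing in Theorem \ref{cor12v}, so that theorem gives a model for $G$ over $\mathbb{R}$.

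The key steps, in order: (i) pick a lift $A$ with $f_A(t)\in\mathbb{R}[t]$; (ii) note the eigenvalues are $c\cdot\{1,\zeta_n^a,\zeta_n^b\}$ with common modulus; (iii) use reality of $f_A$ to get closure of the root multiset under $c\mapsto\overline c$ and enumerate the resulting permutations of $\{0,a,b\}\bmod n$ under $k\maps-k$; (iv) read off that one of $a+b,\,2a-b,\,a-2b$ is $0\bmod n$; (v) invoke Theorem \ref{cor12v} to conclude definability over $\mathbb{R}$. For the final sentence — that the converse fails — I would exhibit a concrete $G$ that \emph{is} definable over $\mathbb{R}$ (say one satisfying $a+b\equiv0$, e.g. $n=3$, $(a,b)=(1,2)$, the order-$3$ homology-type element $\operatorname{diag}(1,\zeta_3,\zeta_3^{-1})$) and check by the same eigenvalue analysis that \emph{no} lift $A\in\operatorname{GL}_3(\mathbb{C})$ of \emph{any} generator of $G$ has a real characteristic polynomial: a lift has eigenvalues $c\zeta_3^{ja},c\zeta_3^{j(1+a)}\cdot(\dots)$ running over generators $j$ coprime to $3$, and in each case the argument of $c$ cannot be chosen to make the triple conjugation-stable because $\{0,1,2\}\bmod 3$ is not of the form $\{x,-x,y\}$ after any rotation — one verifies the three products of pairs of roots and the sum are never simultaneously real. (Here a short explicit computation suffices; I would double-check the arithmetic rather than rely on the heuristic.)

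The main obstacle I anticipate is step (iii)–(iv): the enumeration of cases is where it is easy to miss a branch or to mishandle the common scalar $c$ (whose argument is a free parameter one is allowed to choose when picking the lift, which is precisely what makes the $\PGL$ versus $\GL$ distinction subtle). One must be careful that the condition is "there \emph{exists} a lift with real characteristic polynomial," not "every lift," so the scalar $c=|c|e^{i\theta}$ should be carried through and $\theta$ chosen to realize the symmetry — and correspondingly, for the converse example, one must rule out \emph{all} choices of lift \emph{and} all choices of generator of $G$, which is the genuinely non-trivial verification. Everything else is bookkeeping with roots of unity and an appeal to the already-proved Theorem \ref{cor12v}.
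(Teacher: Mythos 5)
Your forward direction is correct and reaches the same destination as the paper --- reduction to the congruence conditions of Theorem \ref{cor12v} --- but by a different route. The paper works with the \emph{coefficients} of $f_A(t)=(t-c)(t-c\zeta_n^a)(t-c\zeta_n^b)$: it shows that the three real symmetric functions force $c$ to satisfy a cubic over $\mathbb{R}$, hence (as $[\mathbb{C}:\mathbb{R}]=2$) that $c\in\mathbb{R}$, and from there that $\zeta_n^{a+b},\,\zeta_n^a+\zeta_n^b\in\mathbb{R}$, which pins down the single congruence $a+b\equiv0\pmod n$. You instead work with the \emph{roots}, using conjugation-stability of the multiset $\{c,c\zeta_n^a,c\zeta_n^b\}$ and the equal-modulus observation to enumerate where $\overline{c}$ can land; this yields the full list $a+b\equiv0$, $a-2b\equiv0$ or $2a-b\equiv0\pmod n$, any one of which suffices to invoke Theorem \ref{cor12v}. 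Your case analysis checks out (the degenerate branches force $n\mid 2\gcd(a,b)=2$ or the homology case, both excluded), so this part is a legitimate, arguably more transparent, alternative; the paper's argument buys the sharper conclusion that $c$ itself must be real.

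The converse part, however, contains a genuine error. Your proposed witness $n=3$, $\pi(A)=\operatorname{diag}(1,\zeta_3,\zeta_3^{-1})$ fails immediately: the lift $A=\operatorname{diag}(1,\zeta_3,\zeta_3^{2})$ has $f_A(t)=t^3-1\in\mathbb{R}[t]$, and your claim that ``$\{0,1,2\}\bmod 3$ is not of the form $\{y,x,-x\}$ after any rotation'' is false, since $\{0,1,2\}\equiv\{0,1,-1\}\pmod 3$. Worse, the strategy itself cannot be repaired: by Theorem \ref{cor12v} a definable cyclic $G$ of order $n\geq3$ satisfies one of the three congruences, and in each case a suitable scalar ($c=1$, $c=\zeta_n^{-b}$, or $c=\zeta_n^{-a}$ respectively) produces a lift of a generator whose eigenvalue multiset is conjugation-stable, i.e.\ a lift with real characteristic polynomial --- this is exactly the contrapositive of your own step (iii)--(iv). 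So there is no definable $G$ for which \emph{every} lift of \emph{every} generator has non-real characteristic polynomial. What the paper actually does for the final sentence is much weaker: it exhibits $A=\operatorname{diag}(\zeta_5^3,\zeta_5^4,\zeta_5^2)$, a \emph{particular} lift of a generator of a definable group with $f_A(t)\notin\mathbb{R}[t]$, i.e.\ it reads ``the converse'' as the (false) statement that definability forces \emph{all} such lifts to have real characteristic polynomial. You should either adopt that reading and give such a one-line example, or flag that the existential converse is in fact true; as written, your verification plan would collapse when you carried out the arithmetic you promised to double-check.
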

\begin{proof}
See section \S \ref{cyclic}, page 7.
\end{proof}

For $G=\operatorname{D}_{2n}$, a dihedral group in $\operatorname{PGL}_3(\mathbb{C})$, we prove:

\begin{thm}\label{dihedralv1}
Any dihedral group $\operatorname{D}_{2n}$ of order $2n$ with $n\geq3$ in $\operatorname{PGL}_3(\mathbb{C})$ is conjugate to $\langle B,\,A\rangle$, where $B=[X:Z:Y]$ and $A=\operatorname{diag}(1,\zeta_n^a,\zeta_n^{-a})$ for some integer $a$ such that $\operatorname{gcd}(n,a)=1$. Moreover, we always can descend it to {$\mathbb{R}$} as $\langle\,\phi^{-1}\,B\,\phi,\,\phi^{-1}\,A\,\phi\rangle$, where $\phi^{-1}\,A\,\phi$ is as given in Theorem \ref{cor12v} and
$$
\phi^{-1}\,B\,\phi=\left(
                                                      \begin{array}{ccc}
                                                        2\Im(\alpha\,\overline{\beta}) & 0 & 0 \\
                                                        0 & -2\Im(\alpha\,\beta) & -2\Im(\beta^2) \\
                                                        0 & 2\Im(\alpha^2) & 2\Im(\alpha\,\beta) \\
                                                      \end{array}
                                                    \right)\\
$$
for some $\alpha,\beta\in\mathbb{C}^*$.
  \end{thm}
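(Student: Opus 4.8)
The plan is to establish the two assertions in sequence: the conjugacy normal form $\langle B,\pi(A)\rangle$, and then its explicit descent to $\mathbb{R}$.

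\emph{Step 1: the normal form.} I would present $\operatorname{D}_{2n}=\langle r,s\mid r^n=s^2=1,\ srs^{-1}=r^{-1}\rangle$ and let $\rho$ be the given embedding into $\PGL_3(\mathbb{C})$. Since $\langle\rho(r)\rangle$ is cyclic of order $n$, the normalization recalled just before Theorem \ref{cyclicp2} lets me conjugate all of $\rho(\operatorname{D}_{2n})$ so that $\rho(r)=\pi(\operatorname{diag}(1,\zeta_n^{a_0},\zeta_n^{b_0}))$. The relation $srs^{-1}=r^{-1}$ then forces $\rho(r)$ and $\rho(r)^{-1}$ to be $\PGL_3(\mathbb{C})$-conjugate; lifting to $\GL_3(\mathbb{C})$ this says $\{1,\zeta_n^{a_0},\zeta_n^{b_0}\}=c\cdot\{1,\zeta_n^{-a_0},\zeta_n^{-b_0}\}$ as multisets, for some $c\in\mathbb{C}^\times$. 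Running through the possibilities $c\in\{1,\zeta_n^{a_0},\zeta_n^{b_0}\}$ and discarding the subcases in which $\pi(\operatorname{diag}(1,\zeta_n^{a_0},\zeta_n^{b_0}))$ would have order $<n$ — in particular the homology case, which an easy eigenvalue check shows is never $\PGL_3(\mathbb{C})$-conjugate to its inverse when $n\geq3$ — I would conclude that, after permuting coordinates and rescaling, $b_0\equiv-a_0\pmod n$. Setting $a:=a_0$ gives $A:=\operatorname{diag}(1,\zeta_n^a,\zeta_n^{-a})$, and $\gcd(a,n)=1$ is precisely the condition that $\pi(A)$ has order $n$.

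\emph{Step 2: placing the reflection.} Since $n\geq3$ and $\gcd(a,n)=1$, the entries $1,\zeta_n^a,\zeta_n^{-a}$ are pairwise distinct, so the centralizer of $\pi(A)$ in $\PGL_3(\mathbb{C})$ is the diagonal torus $\overline{T}$. Writing $s'$ for the image of $s$ after the Step 1 conjugation, the element $B=[X:Z:Y]$ satisfies $B\,\pi(A)\,B^{-1}=\pi(A)^{-1}=s'\,\pi(A)\,s'^{-1}$ and $B^2=1$; hence $B^{-1}s'\in\overline{T}$, say $s'=B\cdot\pi(\operatorname{diag}(1,d,d'))$. The relation $s'^2=1$ becomes $\pi(\operatorname{diag}(1,dd',dd'))=1$, i.e.\ $dd'=1$, and conjugating by $\pi(\operatorname{diag}(1,d,1))\in\overline{T}$ — which fixes $\pi(A)$ — sends $s'$ to $B$. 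Thus $\operatorname{D}_{2n}$ is $\PGL_3(\mathbb{C})$-conjugate to $\langle B,\pi(A)\rangle$, which has order exactly $2n$ and is dihedral because $\langle\pi(A)\rangle$ consists of diagonal classes while $B$ does not.

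\emph{Step 3: descent to $\mathbb{R}$.} Fix $\alpha,\beta\in\mathbb{C}^\times$ with $\Im(\alpha\overline{\beta})\neq0$, put $\Phi=\begin{pmatrix}\alpha&\beta\\ \overline{\alpha}&\overline{\beta}\end{pmatrix}$ (so $\det\Phi=2i\,\Im(\alpha\overline{\beta})\neq0$), and let $\phi=\pi(\operatorname{diag}(1,\Phi))$, the class of the block-diagonal matrix with $2\times2$ block $\Phi$. This $\phi$ is, up to scalar, the conjugator used in the proof of Theorem \ref{cor12v}: a block computation shows $\phi^{-1}A\phi$ equals the matrix displayed there. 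Since $B=\pi(\operatorname{diag}(1,J))$ with $J=\begin{pmatrix}0&1\\1&0\end{pmatrix}$, one has $\phi^{-1}B\phi=\pi(\operatorname{diag}(1,\Phi^{-1}J\Phi))$, and a direct $2\times2$ computation gives
\[
\Phi^{-1}J\Phi=\frac{1}{\Im(\alpha\overline{\beta})}\begin{pmatrix}-\Im(\alpha\beta)&-\Im(\beta^2)\\ \Im(\alpha^2)&\Im(\alpha\beta)\end{pmatrix};
\]
scaling this representative by $2\,\Im(\alpha\overline{\beta})$ reproduces exactly the matrix for $\phi^{-1}B\phi$ in the statement. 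All entries of $\phi^{-1}A\phi$ and $\phi^{-1}B\phi$ are real, so $\langle\phi^{-1}B\phi,\phi^{-1}A\phi\rangle\subset\PGL_3(\mathbb{R})$ is the desired model over $\mathbb{R}$, being conjugate to $\operatorname{D}_{2n}$.

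\emph{Main obstacle.} The delicate part is Steps 1--2: the eigenvalue/scalar bookkeeping that isolates the normal form $\operatorname{diag}(1,\zeta_n^a,\zeta_n^{-a})$ (notably the fact that a homology of period $\geq3$ is never conjugate to its inverse, which is where $n\geq3$ is really used), together with the centralizer argument that removes the residual diagonal ambiguity and pins the reflection down to $[X:Z:Y]$. Once $\langle B,\pi(A)\rangle$ is reached, Step 3 is a routine — if slightly tedious — matrix computation, made painless by the fact that the correct $\phi$ is handed to us by Theorem \ref{cor12v}.
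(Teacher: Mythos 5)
Your argument is correct in substance and follows essentially the same route as the paper: normalize the rotation to $\operatorname{diag}(1,\zeta_n^a,\zeta_n^{-a})$ using the fact that the reflection conjugates it to its inverse (which rules out homologies and, after permuting coordinates, reduces the congruences of Theorem \ref{cor12v} to $b\equiv-a \pmod n$), normalize the reflection to $[X:Z:Y]$ by a diagonal conjugation, and finally conjugate by the block transformation $\phi$ of Theorem \ref{cor12v}; your $2\times2$ computation of $\Phi^{-1}J\Phi$ reproduces exactly the paper's $3\times3$ computation of $\phi^{-1}B\phi$. The one genuine divergence is in your Step 2: the paper simply quotes \cite[Lemma 2.3.7]{Hugg1} to place the reflection in the form $[X:\nu Z:\nu^{-1}Y]$, whereas you derive this from scratch via the centralizer of $\pi(A)$, which has the merit of being self-contained. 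However, your claim that this centralizer is the diagonal torus fails precisely when $n=3$: for $A=\operatorname{diag}(1,\zeta_3,\zeta_3^{-1})$ a cyclic permutation of the coordinates conjugates $A$ to a scalar multiple of itself, so it centralizes $\pi(A)$ in $\PGL_3(\mathbb{C})$ and the projective centralizer is strictly larger than the torus. The repair is easy --- $B^{-1}s'$ may then also involve a $3$-cycle, making $s'$ a different transposition-type monomial matrix, and a further conjugation by a $3$-cycle (which fixes $\pi(A)$) returns it to the $(2\,3)$ form --- but as written your step does not cover $n=3$. Two trivial slips worth correcting: with $s'=[X:d'Z:dY]$ and $dd'=1$, the diagonal conjugator producing $B$ is $\operatorname{diag}(1,d^{-1},1)$ rather than $\operatorname{diag}(1,d,1)$; and in Step 1 it is worth saying explicitly (as the paper does) that the excluded case $2a_0\equiv 2b_0\equiv 0$ contradicts $\gcd(a_0,b_0)=1$ for $n\geq3$.
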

\begin{proof}
See section \S \ref{sectiondihedral}, page 8.
\end{proof}
When $G$ is a type $\mathfrak{P}$-group, we show the following.
\begin{thm}\label{hessianthm}
Any type $\mathfrak{P}$-group $G$ in $\operatorname{PGL}_3(\mathbb{C})$ is pseudo-real except when $G$ equals $\operatorname{A}_5$. More concretely, $\operatorname{A}_5$ has a real field of moduli, and we always can descend it to $\mathbb{R}$ as $
\phi^{-1}\,\langle\,A,\,B,\,C\rangle\,\,\phi$ such that $\phi^{-1}\,A\,\phi$ and $\phi^{-1}\,B\,\phi$ are as described in Theorem \ref{dihedralv1} with $n=5$ and $a=4$, and $\phi^{-1}\,C\,\phi$ equals
$$
\left(
                                                      \begin{array}{ccc}
                                                       4\Im(\alpha\,\overline{\beta}) & 8\Im(\alpha\,\overline{\beta})\,\Re(\alpha) & 8\Im(\alpha\,\overline{\beta})\,\Re(\beta) \\
                                                        2\Im(\overline{\beta}) &2\left(\operatorname{cos}(4\pi/5)\Im(\alpha\overline{\beta})-\operatorname{cos}(2\pi/5)\Im(\alpha\beta)\right) & -2\operatorname{cos}(2\pi/5)\Im(\beta^2) \\
                                                        2\Im(\alpha) & 2\operatorname{cos}(2\pi/5)\Im(\alpha^2) & 2\left(\operatorname{cos}(4\pi/5)\Im(\alpha\overline{\beta})+\operatorname{cos}(2\pi/5)\Im(\alpha\beta)\right) \\
                                                      \end{array}
                                                    \right),
$$
for some $\alpha,\beta\in\mathbb{C}^*$.
\end{thm}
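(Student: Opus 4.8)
The plan is to prove three things separately: that all six groups have a real field of moduli, that $\operatorname{A}_5$ is definable over $\mathbb{R}$ with the displayed model, and that the other five are pseudo-real. For the first point I would invoke Mitchell's classification \cite{Mit} of finite primitive subgroups of $\PGL_3(\mathbb{C})$: up to $\PGL_3(\mathbb{C})$-conjugacy, each of $\operatorname{Hess}_{216},\operatorname{Hess}_{72},\operatorname{Hess}_{36},\operatorname{PSL}(2,7),\operatorname{A}_5,\operatorname{A}_6$ is the \emph{unique} finite primitive subgroup of $\PGL_3(\mathbb{C})$ of its isomorphism type, the orders being pairwise distinct. Since $\sigma$ acts on $\PGL_3(\mathbb{C})$ as a group automorphism that carries invariant points, lines and triangles to invariant points, lines and triangles, the conjugate $^{\sigma}G$ is again a finite primitive subgroup of $\PGL_3(\mathbb{C})$ abstractly isomorphic to $G$, hence $\PGL_3(\mathbb{C})$-conjugate to $G$; so $^{\sigma}G$ and $G$ are conjugate and all six groups have a real field of moduli.

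Next I would record a representation-theoretic reformulation of definability for a primitive $G$: \emph{$G$ is definable over $\mathbb{R}$ if and only if some $\Z/2\Z$-central extension $\widetilde{G}$ of $G$ (in particular $G\times\Z/2\Z$ is allowed) has a faithful $3$-dimensional representation of real type, i.e.\ Frobenius--Schur indicator $+1$, whose projectivization realizes the embedding $G\hookrightarrow\PGL_3(\mathbb{C})$ up to conjugacy.} Indeed, if $\phi^{-1}G\phi\subset\PGL_3(\mathbb{R})$ then the preimage of $\phi^{-1}G\phi$ in $\{A\in\operatorname{GL}_3(\mathbb{R}):\det A=\pm1\}$ is such a $\widetilde{G}$ — its scalars are exactly $\{\pm I\}$, because $\mathbb{R}$ has no primitive cube root of unity — and its defining $3$-dimensional real representation is irreducible since $G$ is primitive (a proper invariant subspace would produce a $G$-invariant point or line in $\mathbb{P}^2$); the converse is immediate. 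In particular, to prove that some $G$ is pseudo-real it is enough to check that \emph{no} $\Z/2\Z$-central extension of $G$ has a faithful $3$-dimensional complex representation with real character.

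For $\operatorname{A}_5$ this applies at once: the faithful $3$-dimensional irreducible representation of $\operatorname{A}_5$ has real character and indicator $+1$ — the icosahedral rotation group already lies in $\operatorname{SO}_3(\mathbb{R})\subset\PGL_3(\mathbb{R})$ — so $\operatorname{A}_5$ is definable over $\mathbb{R}$. To obtain the displayed model I would write $\operatorname{A}_5=\langle A,B,C\rangle$ with $\langle A,B\rangle=\operatorname{D}_{10}$, $A=\operatorname{diag}(1,\zeta_5^4,\zeta_5^{-4})$ and $B=[X:Z:Y]$ in the normal form of Theorem \ref{dihedralv1} (case $n=5$, $a=4$), and $C$ the standard third generator of the icosahedral presentation. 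Taking $\phi$ as in Theorem \ref{dihedralv1} makes $\phi^{-1}A\phi$ and $\phi^{-1}B\phi$ equal to the displayed real matrices, so it remains only to carry out the product $\phi^{-1}C\phi$ and verify that it is the asserted matrix. This is the one computation left, and it is routine once the correct $C$ has been fixed.

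Finally, for the five pseudo-real groups I would run the criterion from the reformulation. For $\operatorname{PSL}(2,7)$ the $\Z/2\Z$-central extensions are $\operatorname{PSL}(2,7)\times\Z/2\Z$ and $\operatorname{SL}(2,7)$: the degree-$3$ irreducible characters of the former are the two complex conjugate ones with values in $\mathbb{Q}(\sqrt{-7})$, not real, and $\operatorname{SL}(2,7)$ has no degree-$3$ irreducible character (alternatively: an order-$7$ element acts with eigenvalue ratios $\{1,\zeta_7,\zeta_7^3\}$, which are not stable under complex conjugation, so the $\Z/7\Z$ it generates is already pseudo-real by Theorem \ref{cor12v}). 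For $\operatorname{A}_6$ the $\Z/2\Z$-central extensions are $\operatorname{A}_6\times\Z/2\Z$ and $\operatorname{SL}(2,9)=2.\operatorname{A}_6$, neither of which has a degree-$3$ irreducible character; the relevant $3$-dimensional projective representation lifts only to the Valentiner $\Z/3\Z$-cover $3.\operatorname{A}_6$, so its obstruction class is a generator of the $\Z/3\Z$ inside the Schur multiplier $\Z/6\Z$ of $\operatorname{A}_6$ and cannot be realized by a real — i.e.\ $\Z/2\Z$- — central extension. For the Hessian groups, writing $V\cong\mathbb{F}_3^2$ for the elementary abelian normal subgroup, so that $\operatorname{Hess}_{36}=V\rtimes\Z/4\Z$, $\operatorname{Hess}_{72}=V\rtimes Q_8$ and $\operatorname{Hess}_{216}=V\rtimes\operatorname{SL}(2,3)$: in any $\Z/2\Z$-central extension the Sylow-$3$ subgroup is again $V$ (normal), and in a faithful $3$-dimensional representation $V$ would act by three linear characters forming a subset of $\widehat{V}$ that spans $\widehat{V}\cong\mathbb{F}_3^2$ and is stable under the induced action (which factors through the original action of $\Z/4\Z$, $Q_8$, or $\operatorname{SL}(2,3)$ on $V$); but that action has orbits on $\widehat{V}\setminus\{0\}$ only of size $4$ (for $\Z/4\Z$) or $8$ (for $Q_8$ and $\operatorname{SL}(2,3)$), so the only stable multiset of size $3$ is $\{0,0,0\}$, which does not span — a contradiction. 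Hence none of the three Hessian groups is definable over $\mathbb{R}$ (consistently, $\operatorname{Hess}_{36}$ needs the $\Z/3\Z$-Heisenberg cover $3^{1+2}\rtimes\Z/4\Z$ to sit in $\PGL_3$, and $\operatorname{Hess}_{216}$ even contains the period-$3$ homology $\operatorname{diag}(1,\zeta_3,1)$, pseudo-real by Theorem \ref{cor12v}). The hard part will be exactly this last case for $\operatorname{Hess}_{36},\operatorname{Hess}_{72}$ and $\operatorname{A}_6$: unlike $\operatorname{PSL}(2,7)$ and $\operatorname{Hess}_{216}$ they contain no pseudo-real cyclic subgroup — every cyclic subgroup of $\operatorname{Hess}_{36}$ (all of order $\leq 4$) satisfies the criterion of Theorem \ref{cor12v}, and likewise for $\operatorname{A}_6$ — so there is no shortcut through a single element and one is forced to use the global obstruction above; a secondary, purely computational, difficulty is the verification of $\phi^{-1}C\phi$ in the $\operatorname{A}_5$ case.
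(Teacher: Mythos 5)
Your proposal reaches all the stated conclusions and is essentially sound, but it takes a genuinely different route from the paper on the decisive point, namely why the Hessian groups and $\operatorname{A}_6$ fail to descend to $\mathbb{R}$. The paper's proof splits exactly as yours does (real field of moduli; $\operatorname{A}_5$ descends via the explicit $\phi$; the rest are pseudo-real), and for $\operatorname{PSL}(2,7)$ it uses precisely your parenthetical shortcut: the element $\operatorname{diag}(1,\zeta_7,\zeta_7^3)$ generates a cyclic group that is not definable over $\mathbb{R}$ by Theorem \ref{cor12v}. But for $\operatorname{Hess}_{216},\operatorname{Hess}_{72},\operatorname{Hess}_{36}$ and $\operatorname{A}_6$ the paper simply observes that each contains $\langle S,T\rangle\cong C_3\times C_3$ and quotes Hu's result \cite[Lemma 5.2]{YHu} that $C_3\times C_3$ embeds in $\operatorname{PGL}_3(K)$ only when $\zeta_3\in K$; you instead set up a criterion via $\Z/2\Z$-central extensions (the preimage in $\{A\in\operatorname{GL}_3(\mathbb{R}):\det A=\pm1\}$) and rule out real models by Schur-multiplier/orbit arguments --- your analysis of the $\widehat{V}$-orbits under $\Z/4\Z$, $Q_8$, $\operatorname{SL}(2,3)$ is in effect a self-contained proof of the case of Hu's lemma that the paper imports, and your Valentiner-cover argument for $\operatorname{A}_6$ is likewise a structural replacement for the $C_3\times C_3$ observation. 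What the paper's route buys is brevity (one cited lemma covers four groups at once); what yours buys is self-containment and an explanation of \emph{where} the obstruction lives. Two points to tighten: for the ``real field of moduli'' claim the paper does not rely on a blanket uniqueness-of-conjugacy-class statement for $\operatorname{Hess}_{72}$ and $\operatorname{Hess}_{36}$ --- it computes $^{\sigma}S=S^{-1}$, $^{\sigma}T=T$, $^{\sigma}V=V^{-1}$ directly and, for $\operatorname{Hess}_{72}$, argues that $^{\sigma}\operatorname{Hess}_{72}$ is an $\operatorname{Hess}_{216}$-conjugate copy --- so you should either verify that Mitchell's classification really gives a single $\operatorname{PGL}_3(\mathbb{C})$-conjugacy class for each of the six types or fall back on the explicit generator computation; and the identity for $\phi^{-1}C\phi$ still has to be carried out with the specific matrix $C$ (which the paper takes from \cite[Lemma 2.3.7]{Hugg1}), though the paper itself also leaves that multiplication as a routine check.
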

\textbf{\begin{proof}
See section \S \ref{typePgroups}, pages 8-10.
\end{proof} }

When $G$ is a type $\mathfrak{C}$ or a type $\mathfrak{D}$  group, we show the following.
\begin{thm}\label{typeCorD}
Any type $\mathfrak{C}$-group in $\operatorname{PGL}_3(\mathbb{C})$ is pseudo-real, except $\langle T\rangle$ and $\mathcal{G}_{12}$.

Similarly, any type $\mathfrak{D}$-group in $\operatorname{PGL}_3(\mathbb{C})$ is pseudo-real, except $\langle T, R\rangle$ and $\mathcal{G}_{24}$.
\end{thm}
\begin{proof}
See section \S \ref{TypesCD}, page 10-11.
\end{proof}
 A connection with these notions in arithmetic geometry is described by the next result.
\begin{thm}\label{connection}
Let $C:F(X,Y,Z)=0$ be a smooth plane curve over $\mathbb{C}$. If $C$ has a real field of moduli in the Complex Arithmetic Geometry sense (See (2) in the Introduction), then its automorphism group  $\operatorname{Aut}(C)$ has a real field of moduli in the Group Theory sense described in this paper.
\end{thm}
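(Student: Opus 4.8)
The plan is to reduce the arithmetic hypothesis to the single statement that $C$ is $\mathbb{C}$-isomorphic to its conjugate curve $C^{\sigma}\colon F^{\sigma}(X,Y,Z)=0$, where $F^{\sigma}$ is obtained from $F$ by applying $\sigma$ to each coefficient, and then to turn such an isomorphism into an element of $\operatorname{PGL}_3(\mathbb{C})$ conjugating $\operatorname{Aut}(C)$ to ${}^{\sigma}\operatorname{Aut}(C)$. First I would recall the standard fact that, for a smooth plane curve $C$ of degree $d\geq 4$ (the range in which $\operatorname{Aut}(C)$ is finite by Hurwitz and is realized inside $\operatorname{PGL}_3(\mathbb{C})$ by Chang \cite{Cha78}), having a real field of moduli in the Arithmetic Geometry sense is equivalent to the existence of an isomorphism $C\to C^{\sigma}$ over $\mathbb{C}$. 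By Chang's theorem applied once more, any such isomorphism between the smooth degree-$d$ plane curves $C$ and $C^{\sigma}$ is induced by some $\phi\in\operatorname{PGL}_3(\mathbb{C})$, so that $\phi(C)=C^{\sigma}$ as subvarieties of $\mathbb{P}^2(\mathbb{C})$.

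The core of the argument is the compatibility of the two $\sigma$-actions involved: the action on plane curves (on the coefficients of $F$) and the action on $\operatorname{PGL}_3(\mathbb{C})$ (on the entries of a lift to $\operatorname{GL}_3(\mathbb{C})$). Writing the membership $A\in\operatorname{Aut}(C)$ in the form $F\circ\widetilde{A}^{-1}=\lambda F$ for a lift $\widetilde{A}\in\operatorname{GL}_3(\mathbb{C})$ and some $\lambda\in\mathbb{C}^{\times}$, and applying $\sigma$ coefficientwise, one gets $F^{\sigma}\circ(\widetilde{{}^{\sigma}A})^{-1}=\overline{\lambda}\,F^{\sigma}$, i.e.\ ${}^{\sigma}A\in\operatorname{Aut}(C^{\sigma})$. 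Since $\sigma^{2}=\mathrm{id}$, this yields the key identity
$$
{}^{\sigma}\operatorname{Aut}(C)=\operatorname{Aut}(C^{\sigma})
$$
as subgroups of $\operatorname{PGL}_3(\mathbb{C})$ (with respect to the model $F^{\sigma}=0$); the only bookkeeping needed is that lifts and the scalar $\lambda$ are defined only up to a nonzero constant, which does not affect the statement.

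Next, using that $\phi$ induces an isomorphism of plane curves $C\to C^{\sigma}$, I would verify that $\operatorname{Aut}(C^{\sigma})=\phi\,\operatorname{Aut}(C)\,\phi^{-1}$: if $A$ preserves $C$ then $\phi A\phi^{-1}$ preserves $\phi(C)=C^{\sigma}$, and conjugation by $\phi$ is a bijection (indeed a group isomorphism) between the two automorphism groups. Chaining the two displayed equalities gives ${}^{\sigma}\operatorname{Aut}(C)=\phi\,\operatorname{Aut}(C)\,\phi^{-1}$, and with $\psi:=\phi^{-1}$ this reads $\psi^{-1}\operatorname{Aut}(C)\,\psi={}^{\sigma}\operatorname{Aut}(C)$, which is exactly the condition that $\operatorname{Aut}(C)$ has a real field of moduli in the Group Theory sense. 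This completes the plan.

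The step I expect to be the main obstacle is the very first one: pinning down precisely what "real field of moduli in the Arithmetic Geometry sense" means in this setting and confirming that it is genuinely equivalent to $C\cong_{\mathbb{C}}C^{\sigma}$, together with checking that Chang's theorem is being used in the correct form, namely that an abstract isomorphism between the smooth plane curves $C$ and $C^{\sigma}$ of the same degree is the restriction of a projective transformation. Once $\phi$ with $\phi(C)=C^{\sigma}$ is in hand, the rest is the short, purely formal computation above; in particular, in contrast with the earlier theorems, no explicit real model or descent matrix needs to be produced here.
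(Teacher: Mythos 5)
Your proposal is correct and follows essentially the same route as the paper: pass from the real field of moduli to a $\mathbb{C}$-isomorphism $C\cong{}^{\sigma}C$ realized by some $\phi\in\operatorname{PGL}_3(\mathbb{C})$ (Chang), establish $\operatorname{Aut}({}^{\sigma}C)={}^{\sigma}\operatorname{Aut}(C)$ by applying $\sigma$ to the invariance equation ${}^{\phi}F=cF$ (this is exactly the paper's Lemma \ref{automo1}), and conclude by conjugation. The only difference is cosmetic: you flag the equivalence of ``real field of moduli'' with $C\cong_{\mathbb{C}}{}^{\sigma}C$ as a point needing care, whereas the paper simply takes it as the working characterization.
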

\begin{proof}
See section \S \ref{counterexample111}, page 11-12.
\end{proof}
The converse of Theorem \ref{connection} is not necessarily true. For instance, the generic smooth plane curve $\mathcal{C}$ of a fixed degree $d\geq 4$ over $\mathbb{C}$ has trivial automorphism group. In the language of Riemann surfaces, $\mathcal{C}$ can not be conformally equivalent to its conjugate $^{\sigma}\mathcal{C}$, which means that $\mathcal{C}$ does not have a real field of moduli. (We would like to thank the anonymous referee for suggesting the existence of these types of examples, and we are also grateful to  R. Hidalgo for clarifying some details regarding them).

We also provide the following counter example with non-trivial automorphism group.
\begin{example}\label{exam1v1}
There are infinitely many smooth plane quintic curves defined over $\mathbb{C}$ by an equation of the form
$$
\mathcal{C}_{\alpha,\beta}: X^5+Y^5+Z^5+\alpha X(YZ)^2+\beta X^3(YZ)=0,$$
such that the automorphism group $\Aut(\mathcal{C}_{\alpha,\beta})=\operatorname{D}_{10}$ has { a real field of moduli}, but $\mathcal{C}_{\alpha,\beta}$ does not have { a real field of moduli} as its field of moduli.
\end{example}
\section{The cyclic group case}\label{cyclic}
Suppose that $G=\langle\operatorname{diag}(1,\zeta_n^a,\zeta_n^b)\rangle$ in $\PGL_3(\mathbb{C})$ such that $0\leq a<b\leq n-1$ and $\operatorname{gcd}(a,b)=1$.

Since the complex conjugation automorphism $\sigma:\mathbb{C}\rightarrow\mathbb{C}$ sends $\zeta_n\mapsto\zeta_n^{-1}$, then $^{\sigma}G=\langle\operatorname{diag}(1,\zeta_n^{-a},\zeta_n^{-b})\rangle\,=\,G$. In particular, { $G$ has a real field of moduli}. This proves Theorem \ref{cyclicp2}-(1).
To prove Theorem \ref{cyclicp2}-(2), we assume that $G$ descends to $\mathbb{R}$. That is, there exists $\phi\in\PGL_3(\mathbb{C})$ satisfying  $\phi^{-1}\,A\,\phi\in\PGL_3(\mathbb{R}),$
where $A=\operatorname{diag}(1,\zeta_n^a,\zeta_n^b)$. This holds if and only if
$$
\phi^{-1}\,A\,\phi=\,^{\sigma}\left(\phi^{-1}\,A\,\phi\right)\,=\,^{\sigma}\phi^{-1}\,A^{-1}\,^{\sigma}\phi,
$$
which we can read in two different ways. First as
$$
\left(\phi\,\,^{\sigma}\phi^{-1}\right)^{-1}\,A\,\left(\phi\,\,^{\sigma}\phi^{-1}\right)=A^{-1},
$$
which shows that $A$ and $A^{-1}$ are conjugates via $\phi\,\,^{\sigma}\phi^{-1}$. Second as
$$
\phi^{-1}\,A\,\phi\,=\,^{\sigma}\left(\phi^{-1}\,A\,\phi\right),
$$
which shows that $\phi^{-1}\,A\,\phi\in\operatorname{PGL}_3(\mathbb{R})$ as claimed.

We need the following lemma to discuss Theorem \ref{cor12v}.
\begin{lem}\label{lem1}
Assume $A$ and $B$ are matrices in $\operatorname{GL}_3(\mathbb{C})$ such that $\pi(A)$ and $\pi(B)$ are  $\PGL_3(\mathbb{C})$-conjugates (where $\pi$ denotes the natural projection from $\operatorname{GL}_3(\mathbb{C})$ to $\operatorname{PGL}_3(\mathbb{C})$), then there is a constant $c\in\mathbb{C}^\times$ such that the eigenvalues of $B$  are precisely $c\nu_1,c\nu_2,c\nu_3$, where $\nu_1,\nu_2,\nu_3$ are the eigenvalues of $A$.
\end{lem}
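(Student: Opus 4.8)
The plan is to unwind the meaning of ``$\pi(A)$ and $\pi(B)$ are $\PGL_3(\mathbb{C})$-conjugate'' at the level of $\operatorname{GL}_3(\mathbb{C})$, and then compare characteristic polynomials. First I would fix $\phi\in\operatorname{GL}_3(\mathbb{C})$ with $\pi(\phi)^{-1}\,\pi(A)\,\pi(\phi)=\pi(B)$. Since $\pi$ is a group homomorphism, this says $\pi(\phi^{-1}A\phi)=\pi(B)$, and by definition of $\pi$ (whose kernel is the group of scalar matrices $\mathbb{C}^\times\subset\operatorname{GL}_3(\mathbb{C})$) this is equivalent to the existence of a scalar $d\in\mathbb{C}^\times$ with
$$
\phi^{-1}\,A\,\phi = d\,B .
$$

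The second step is purely linear-algebraic. Conjugate matrices have the same characteristic polynomial, so $f_{\phi^{-1}A\phi}(t)=f_A(t)$; in particular $\phi^{-1}A\phi$ and $A$ have the same multiset of eigenvalues $\{\nu_1,\nu_2,\nu_3\}$. On the other hand, scaling a $3\times 3$ matrix by $d$ scales each eigenvalue by $d$, so the eigenvalues of $dB$ are $d\mu_1,d\mu_2,d\mu_3$, where $\mu_1,\mu_2,\mu_3$ are the eigenvalues of $B$. (Concretely this is the identity $f_{dB}(t)=d^3 f_B(t/d)$ already recorded in \eqref{char1} for the lift-of-$\PGL_3$ action, applied here with $B$ in place of the general lift.) Combining with the displayed equation, $\{d\mu_1,d\mu_2,d\mu_3\}=\{\nu_1,\nu_2,\nu_3\}$ as multisets, so after relabeling the $\nu_i$ we get $\mu_j = d^{-1}\nu_j$ for $j=1,2,3$. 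Setting $c:=d^{-1}\in\mathbb{C}^\times$ gives exactly the claimed statement: the eigenvalues of $B$ are $c\nu_1,c\nu_2,c\nu_3$.

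There is essentially no obstacle here; the only point that requires a word of care is the passage from a conjugacy relation in $\operatorname{PGL}_3(\mathbb{C})$ to one ``up to a scalar'' in $\operatorname{GL}_3(\mathbb{C})$, i.e. identifying the fibers of $\pi$ with $\mathbb{C}^\times$-orbits, and the fact that the reindexing of the eigenvalues of $A$ is harmless since the statement only asserts the existence of such a labeling. I would also note in passing that the scalar $c$ is determined only up to the choice of lifts $A$ and $B$ (replacing $B$ by $c'B$ replaces $c$ by $c/c'$), which is consistent with the equivalence-class point of view on characteristic polynomials discussed around \eqref{char1}.
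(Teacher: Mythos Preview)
Your proof is correct and follows essentially the same route as the paper: lift the $\PGL_3(\mathbb{C})$-conjugacy to a similarity $\phi^{-1}A\phi=dB$ in $\operatorname{GL}_3(\mathbb{C})$ up to a scalar, then compare eigenvalues (the paper writes the scalar as $1/c$ rather than $d$, which amounts to your substitution $c:=d^{-1}$). Your additional remarks on reindexing and the non-uniqueness of $c$ are accurate but not needed for the lemma as stated.
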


\begin{proof}
  Suppose that there exists $\psi\in\PGL_3(\mathbb{C})$ such that $\psi^{-1}\,\pi(A)\,\psi=\pi(B)$ in $\PGL_3(\mathbb{C})$. Then, this equation corresponds to
  $\psi^{-1}\,A\,\psi=(1/c)B$ in $\operatorname{GL}_3(\mathbb{C})$ for some $c\in\mathbb{C}^\times$. Hence, $A$ and $(1/c)B$ are similar matrices in $\operatorname{GL}_3(\mathbb{C})$, so {by elementary linear algebra, we guarantee that their characteristic polynomials have the same roots, say $\nu_1,\nu_2,\nu_3$} . Therefore, the eigenvalues of $B$ are $c\nu_1,c\nu_2,c\nu_3$.
\end{proof}

We now present the  proof of Theorem \ref{cor12v}.
\begin{proof}(of the necessity direction)
First, assume that $G$ is generated by a homology $A=\operatorname{diag}(1,1,\zeta_n)$.
Since $\{c,c,c\,\zeta_n\}\neq\{1,1,\zeta_n^{-1}\}$ for any $c\in\mathbb{C}^*$ unless $n=2$, then $A$ and $A^{-1}$ are never $\PGL_3(\mathbb{C})$-conjugates for $n\geq 3$ by Lemma \ref{lem1}. In particular, $G$ does not have a model over $\mathbb{R}$ by Theorem \ref{cyclicp2}.

Secondly, assume that $G$ is generated by a non-homology $A=\operatorname{diag}(1,\zeta_n^a,\zeta_n^b)$ such that  $\{c,c\,\zeta_n^a,c\,\zeta_n^b\}=\{1,\zeta_n^{-a},\zeta_n^{-b}\}$ for some $c\in\mathbb{C}^*$. Then, $c$ is either $1,\zeta_n^{-a}$ or $\zeta_n^{-b}$. Moreover,

- if $c=1$, then $\zeta_n^{a}=\zeta_n^{-a},\,\zeta_n^{b}=\zeta_n^{-b}$ or $\zeta_n^{a}=\zeta_n^{-b}$.  That is, $2a=2b=0\,\operatorname{mod}\,n$ or $a+b=0\,\operatorname{mod}\,n$. We discard the case $2a=2b=0\,\operatorname{mod}\,n$ as it implies that $n$ or $n/2$ would divide $\operatorname{gcd}(a,b)=1$, a contradiction because $n\geq3$. This leaves us with $a+b=0\,\operatorname{mod}\,n$.

- if $c=\zeta_n^{-a}$, then $\zeta_n^{b-a}=\zeta_n^{-b}$, and $n\,|\,a-2b=0\,\operatorname{mod}\,n$.

- if $c=\zeta_n^{-b}$, then $\zeta_n^{a-b}=\zeta_n^{-a}$, and $2a-b=0\,\operatorname{mod}\,n$.

This completes the necessity part.
\end{proof}

\begin{proof}(of the sufficiency direction) If $G$ is cyclic generated by a homology of period $2$, then $G$ is $\operatorname{PGL}_3(\mathbb{C})$-conjugate to $\langle\operatorname{diag}(1,1,-1)\rangle$ in $\operatorname{PGL}_3(\mathbb{R})$, and we are done. Otherwise, $G$ is generated by a non-homology $A=\operatorname{diag}(1,\zeta_n^{a},\zeta_n^b)$ of order $n\geq3$ such that $a+b,\,a-2b$ or $2a-b$ equals $0\,\operatorname{mod}\,n$. First, we show that any of the last two situation can be reduced to the first one. Indeed, if $A=\operatorname{diag}(1,\zeta_n^{2b},\zeta_n^b)$, then one can take $\psi=[Y:Z:X]$ so that $$\psi^{-1}\,A\,\psi=\operatorname{diag}(\zeta_n^{b},1,\zeta_n^{2b})=\operatorname{diag}(1,\zeta_n^{-b},\zeta_n^{b})=\operatorname{diag}(1,\zeta_n^{a'},\zeta_n^{-a'})\,\,\text{in}\,\,\operatorname{PGL}_3(\mathbb{C}),$$
where $a':=-b$. Similarly, if $A=\operatorname{diag}(1,\zeta_n^{a},\zeta_n^{2a})$, then take $\psi=[Z:X:Y]$ to get  $$\psi^{-1}\,A\,\psi=\operatorname{diag}(\zeta_n^{a},\zeta_n^{2a},1)=\operatorname{diag}(1,\zeta_n^{a},\zeta_n^{-a})\,\,\text{in}\,\,\operatorname{PGL}_3(\mathbb{C}).$$
Now we are going to handle the situation when $n$ divides $a+b$. Take $$\phi=\left(
                                                      \begin{array}{ccc}
                                                        1 & 0 & 0 \\
                                                        0 & \alpha & \beta \\
                                                        0 & \overline{\alpha} & \overline{\beta} \\
                                                      \end{array}
                                                    \right)\in\operatorname{PGL}_3(\mathbb{C}).$$
 One easily verifies that $\phi\,\,^{\sigma}\phi^{-1}=[X:Z:Y]\in\operatorname{Norm}(G,\operatorname{PGL}_3(\mathbb{C}))$ such that $[X:Z:Y]\,\,A\,\,[X:Z:Y]=A^{-1}$. In particular, we deduce by Theorem \ref{cyclicp2} that $\phi^{-1}\,G\,\phi\leq\operatorname{PGL}_3(\mathbb{R})$ is a model of $G$ over $\mathbb{R}$. More specifically,
\begin{eqnarray*}
    \phi^{-1}\,A\,\phi &=& \left(
                                                      \begin{array}{ccc}
                                                        2\Im(\alpha\,\overline{\beta})\,i & 0 & 0 \\
                                                        0 & \overline{\beta} & -\beta \\
                                                        0 & -\overline{\alpha} & \alpha \\
                                                      \end{array}
                                                    \right)\,\operatorname{diag}(1,\zeta_n^{a},\zeta_n^{-a})\,\left(
                                                      \begin{array}{ccc}
                                                        1 & 0 & 0 \\
                                                        0 & \alpha & \beta \\
                                                        0 & \overline{\alpha} & \overline{\beta} \\
                                                      \end{array}
                                                    \right) \\
     &=& \left(
                                                      \begin{array}{ccc}
                                                        2\Im(\alpha\,\overline{\beta})\,i & 0 & 0 \\
                                                        0 & \zeta_n^a\,\,\overline{\beta} & -\zeta_n^{-a}\,\,\beta \\
                                                        0 & -\zeta_n^a\,\,\overline{\alpha} & \zeta_n^{-a}\,\,\alpha \\
                                                      \end{array}
                                                    \right)\,\left(
                                                      \begin{array}{ccc}
                                                        1 & 0 & 0 \\
                                                        0 & \alpha & \beta \\
                                                        0 & \overline{\alpha} & \overline{\beta} \\
                                                      \end{array}
                                                    \right)\\
     &=&\left(
                                                      \begin{array}{ccc}
                                                       2\Im(\alpha\,\overline{\beta})\,i & 0 & 0 \\
                                                        0 & 2\Im(\alpha\,\overline{\beta}\,\zeta_n^a)\,i & 2|\beta|^2\,\operatorname{sin}(2\pi a/n)\,i \\
                                                        0 & -2|\alpha|^2\,\operatorname{sin}(2\pi a/n)\,i & 2\Im(\alpha\,\overline{\beta}\,\zeta_n^{-a})\,i \\
                                                      \end{array}
                                                    \right)\\
                                                    &=&\left(
                                                      \begin{array}{ccc}
                                                       2\Im(\alpha\,\overline{\beta}) & 0 & 0 \\
                                                        0 & 2\Im(\alpha\,\overline{\beta}\,\zeta_n^a) & 2|\beta|^2\,\operatorname{sin}(2\pi a/n) \\
                                                        0 & -2|\alpha|^2\,\operatorname{sin}(2\pi a/n) & 2\Im(\alpha\,\overline{\beta}\,\zeta_n^{-a}) \\
                                                      \end{array}
            \right)\in\operatorname{PGL}_3(\mathbb{R}).
  \end{eqnarray*}
This completes the proof of Theorem \ref{cor12v}.
\end{proof}

Next, we prove Corollary \ref{cyclicp1v31}.
\begin{proof}(of Corollary \ref{cyclicp1v31}) Assume that $G$ is generated by a non-homology $\pi(A)\in\operatorname{PGL}_3(\mathbb{C})$  of order $n\geq3$.
By Lemma \ref{lem1}, there exists $c\in\mathbb{C}^*$ such that
$$
f_A(t)=(t-c)(t-c\zeta_n^a)(t-c\zeta_n^{b})\in\mathbb{R}[t].
$$
 Moreover, the roots $c,\,c\,\zeta_n^a,\,c\,\zeta_n^b$ of $f_A(t)$ are pairwise distinct, since $\pi(A)$ is a non-homology in $\operatorname{PGL}_3(\mathbb{C})$ by assumption.

Now, the coefficients $c^3\zeta_n^{a+b},\,c(1+\zeta_n^a+\zeta_n^b),\,c^2(\zeta_n^{a+b}+\zeta_n^a+\zeta_n^b)$ belong to $\mathbb{R}$. Thus there are $r,\,r'\in\mathbb{R}$ such that $\zeta_n^{a+b}=r/c^3$ and $\zeta_n^a+\zeta_n^b=r'/c-1$. Consequently, the last condition becomes $c^2(r/c^3+r'/c-1)\in\mathbb{R}$, in other words, $c^3-r'c^2+r''c-r=0$ for some $r,r',r''\in\mathbb{R}$. This means that $c\in\mathbb{C}$ is algebraic over $\mathbb{R}$ of degree dividing $3$.
Since $\mathbb{C}/\mathbb{R}$ is a field extension of degree $2$, then $c$ must be algebraic over $\mathbb{R}$ of degree $1$. Therefore, $c\in\mathbb{R}$, which in turns implies that $\zeta_n^{a+b},\,\zeta_n^a+\zeta_n^b\in\mathbb{R}$.

Clearly, $\zeta_n^{a+b}\in\mathbb{R}$ only if $a+b=k(\frac{n}{2})$ with $k=1,\,2$ or $3$, since $3\leq a+b\leq 2n-3$. If $k=1$ or $3$, then $\zeta_{n}^{a+b}=-1$ and $\zeta_n^a+\zeta_n^b=\zeta_n^a-\zeta_n^{-a}=2\operatorname{sin}(2\pi\,a/n)\,i\notin\mathbb{R}$, a contradiction. Hence $k=1$ and $a+b=0\,\operatorname{mod}\,n$. By Theorem \ref{cor12v} we deduce that $f_A(t)\in\mathbb{R}[t]$ is a sufficient (rather than necessary) condition for $G$ to descend to $\mathbb{R}$.

To see that the converse does not hold in general, take $A=\operatorname{diag}(\zeta_5^3,\zeta_5^4,\zeta_5^2)$ in $\operatorname{GL}_3(\mathbb{C})$. Clearly, $f_A(t)\notin\mathbb{R}[t]$. However, $G=\langle\pi(A)\rangle$ is definable over $\mathbb{R}$ by Theorem \ref{cor12v}, since $\pi(A)=\operatorname{diag}(1,\zeta_5,\zeta_5^{-1})=\operatorname{diag}(1,\zeta_n^a,\zeta_n^b)$ with $n\,|\,a+b$.
\end{proof}
\section{The dihedral group case}\label{sectiondihedral}
Suppose that $G=\langle A,B: A^n=B^2=1,\,BAB=A^{-1}\rangle$ is a dihedral group $D_{2n}$ in $\operatorname{PGL}_3(\mathbb{C})$ with $n\geq3$. There is no loss of generality to take $A=\operatorname{diag}(1,\zeta_n^a,\zeta_n^b)$ up to conjugation and projective equivalence. Since $A$ and $A^{-1}$ are $\operatorname{PGL}_3(\mathbb{C})$-conjugates via $B$, then, by Theorem \ref{cor12v}, $A$ must be a non-homology. Moreover, we can always reduce to the case $b=-a$ modulo $n$ and $B\in\operatorname{PBD}(2,1)$. Since $BAB=A^{-1}$, we obtain $B=[X:\nu Z:\nu^{-1}Y]$ for some $\nu\in\mathbb{C}^*$. Through a projective transformation $\psi=\operatorname{diag}(1,\lambda\nu,\lambda)$, which is in $\operatorname{Norm}\left(\langle A\rangle,\,\operatorname{PGL}_3(\mathbb{C})\right)$, we can further reduce to $\nu=1$. Eventually, we conclude:
\begin{lem}\label{importlema}
For each fixed integer $n\geq3$, there is, up to $\operatorname{PGL}_3(\mathbb{C})$-conjugation, a unique dihedral group $D_{2n}$ of order $2n$. More precisely, any such group is conjugate to the group generated by $B=[X:Z:Y]$ and $A=\operatorname{diag}(1,\zeta_n,\zeta_n^{-1})$.
\end{lem}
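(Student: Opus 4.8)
The plan is to reduce a hypothetical $D_{2n}=\langle A,B\mid A^n=B^2=1,\ BAB=A^{-1}\rangle\subset\PGL_3(\mathbb{C})$ to the asserted normal form in three stages: first normalize the cyclic part $\langle A\rangle$, then normalize $B$ by an element that preserves $\langle A\rangle$, and finally replace $A$ by a suitable power so that its canonical generator becomes $\operatorname{diag}(1,\zeta_n,\zeta_n^{-1})$; a short final check then yields existence and uniqueness up to conjugacy.

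First I would invoke the normal form for cyclic subgroups recalled just before Theorem \ref{cyclicp2}: after a $\PGL_3(\mathbb{C})$-conjugation we may assume $A=\operatorname{diag}(1,\zeta_n^a,\zeta_n^b)$ with $0\le a<b\le n-1$ and $\gcd(a,b)=1$. The relation $BAB=A^{-1}$ shows $A$ and $A^{-1}$ are $\PGL_3(\mathbb{C})$-conjugate via $B$, so the necessity direction of Theorem \ref{cor12v}, together with Lemma \ref{lem1}, forces $A$ to be a non-homology with one of $a+b,\ a-2b,\ 2a-b$ congruent to $0$ modulo $n$. In the cases $a\equiv 2b$ and $b\equiv 2a$ I would conjugate the whole pair $(A,B)$ by the coordinate permutations $[Y:Z:X]$, respectively $[Z:X:Y]$, exactly as in the sufficiency proof of Theorem \ref{cor12v}; this keeps the group a $\PGL_3(\mathbb{C})$-conjugate of $G$ and brings $A$ to the shape $\operatorname{diag}(1,\zeta_n^{a},\zeta_n^{-a})$ (in the remaining case $a+b\equiv 0$ it already has this shape). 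Since conjugation preserves order, the resulting $A$ still has order $n$ in $\PGL_3(\mathbb{C})$, which forces $\gcd(a,n)=1$; in particular $1,\zeta_n^a,\zeta_n^{-a}$ are pairwise distinct, so the eigenlines of $A$ are exactly the coordinate points $(1:0:0),(0:1:0),(0:0:1)$.

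Next I would pin down $B$. By \cite[Lemma 2.3.7]{Hugg1} --- or, more directly, because any $\phi$ with $\phi A\phi^{-1}=A^{-1}$ must fix the $1$-eigenline $(1:0:0)$ and interchange the $\zeta_n^{\pm a}$-eigenlines $(0:1:0)$ and $(0:0:1)$ --- the element $B$ has a representative of the form $\left(\begin{smallmatrix}1&0&0\\0&0&\nu\\0&\mu&0\end{smallmatrix}\right)$, and $B^2=1$ in $\PGL_3(\mathbb{C})$ then forces $\mu=\nu^{-1}$, i.e. $B=[X:\nu Z:\nu^{-1}Y]$. Conjugating by $\psi=\operatorname{diag}(1,\nu,1)$ --- which, being diagonal, commutes with $A$ and hence lies in $\operatorname{Norm}(\langle A\rangle,\PGL_3(\mathbb{C}))$ --- replaces $B$ by $[X:Z:Y]$ and leaves $A$ untouched. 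Finally, choosing $a'$ with $aa'\equiv 1\pmod n$ and taking $A^{a'}=\operatorname{diag}(1,\zeta_n,\zeta_n^{-1})$ as the cyclic generator (it generates the same subgroup as $A$) shows $G$ is $\PGL_3(\mathbb{C})$-conjugate to $\langle [X:Z:Y],\ \operatorname{diag}(1,\zeta_n,\zeta_n^{-1})\rangle$. A direct check that this model group satisfies $B^2=1$, that $A$ has exact order $n$ (being a non-homology with $n\ge 3$), that $BAB=A^{-1}$, and that $B\notin\langle A\rangle$ (as $B$ is non-diagonal), confirms it is genuinely a copy of $D_{2n}$; combined with the reduction this gives both existence and uniqueness up to $\PGL_3(\mathbb{C})$-conjugacy.

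I expect the main obstacle to be the coordinated bookkeeping across the first two stages: one must ensure that the coordinate-permutation conjugations used to force $b\equiv -a\pmod n$ act on the whole group (so that $B$ is transported along with $A$), and that the normalization of $B$ is then performed inside $\operatorname{Norm}(\langle A\rangle,\PGL_3(\mathbb{C}))$ so as not to destroy the diagonal form already achieved for $A$. The supporting facts --- that the order constraint forces $\gcd(a,n)=1$, and that the eigenlines of the reduced $A$ are precisely the three coordinate points --- are what make the determination of the shape of $B$ rigorous; the remaining computations are routine linear algebra.
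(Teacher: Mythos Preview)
Your argument follows the same route as the paper's: diagonalize $A$, use Theorem~\ref{cor12v} to reduce to $b\equiv -a\pmod n$, invoke \cite[Lemma~2.3.7]{Hugg1} to force $B$ into the shape $[X:\nu Z:\nu^{-1}Y]$, and then kill $\nu$ by a diagonal conjugation lying in $\operatorname{Norm}(\langle A\rangle,\PGL_3(\mathbb{C}))$. You in fact supply a bit more detail than the paper (the explicit passage from $\operatorname{diag}(1,\zeta_n^a,\zeta_n^{-a})$ to $\operatorname{diag}(1,\zeta_n,\zeta_n^{-1})$ via $A\mapsto A^{a'}$, and the verification that the model group really is $D_{2n}$).

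One small caveat: your alternative ``more direct'' justification---that any $\phi$ with $\phi A\phi^{-1}=A^{-1}$ must fix the eigenline $(1:0:0)$ and interchange the other two---is not valid when $n=3$. In $\PGL_3(\mathbb{C})$ that relation only says $\tilde\phi\,\tilde A\,\tilde\phi^{-1}=c\,\tilde A^{-1}$ in $\GL_3(\mathbb{C})$ for \emph{some} $c$, and when $n=3$ the choices $c=\zeta_3^{\pm a}$ also match eigenvalue sets (since $\zeta_3^{2a}=\zeta_3^{-a}$); so $B$ could a priori be of transposition type $[\nu Y:\nu^{-1}X:Z]$ or $[\nu^{-1}Z:Y:\nu X]$ as well. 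This does not affect the correctness of your proof, since your primary citation of \cite[Lemma~2.3.7]{Hugg1} handles all $n\ge 3$ uniformly, exactly as in the paper; but the parenthetical ``or, more directly'' claim should be restricted to $n\ge 4$ (or patched for $n=3$ by noting that the cyclic permutation $[Y:Z:X]$ centralizes $A$ in $\PGL_3(\mathbb{C})$ and conjugates the extra shapes of $B$ back to $[X:\nu Z:\nu^{-1}Y]$).
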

Now, we will prove Theorem \ref{dihedralv1}; first by showing that a dihedral grou $G$, as in Lemma \ref{importlema},  always has a real field of moduli, secondly, a model over $\mathbb{R}$ for $G$ exists.
\begin{proof} (of Theorem \ref{dihedralv1})
Since $^{\sigma}\,A\,=\,A^{-1}$ and $^{\sigma}\,B\,=\,B^{-1}$, then $^{\sigma}G\,=\,G$ and { $G$ has a real field of moduli}.

On the other hand, we have seen in Theorem \ref{cor12v} that $\phi^{-1}\,A\,\phi\in\operatorname{PGL}_3(\mathbb{R})$ through a projective transformation $\phi$ of the shape:
$$
\phi=\left(
                                                      \begin{array}{ccc}
                                                        1 & 0 & 0 \\
                                                        0 & \alpha & \beta \\
                                                        0 & \overline{\alpha} & \overline{\beta} \\
                                                      \end{array}
                                                    \right).
$$
It remains to see that $\phi^{-1}\,B\,\phi\in\operatorname{PGL}_3(\mathbb{R})$ so that $\phi^{-1}\,G\,\phi$ is a model of $G$ over $\mathbb{R}$. Indeed, we have

\begin{eqnarray*}
    \phi^{-1}\,B\,\phi &=& \left(
                                                      \begin{array}{ccc}
                                                        2\Im(\alpha\,\overline{\beta})\,i & 0 & 0 \\
                                                        0 & \overline{\beta} & -\beta \\
                                                        0 & -\overline{\alpha} & \alpha \\
                                                      \end{array}
                                                    \right)\,[X:Z:Y]\,\left(
                                                      \begin{array}{ccc}
                                                        1 & 0 & 0 \\
                                                        0 & \alpha & \beta \\
                                                        0 & \overline{\alpha} & \overline{\beta} \\
                                                      \end{array}
                                                    \right) \\
     &=& \left(
                                                      \begin{array}{ccc}
                                                       2\Im(\alpha\,\overline{\beta}) & 0 & 0 \\
                                                        0 & -\beta & \overline{\beta} \\
                                                        0 & \alpha & -\overline{\alpha} \\
                                                      \end{array}
                                                    \right)\,\left(
                                                      \begin{array}{ccc}
                                                        1 & 0 & 0 \\
                                                        0 & \alpha & \beta \\
                                                        0 & \overline{\alpha} & \overline{\beta} \\
                                                      \end{array}
                                                    \right)\\
     &=&\left(
                                                      \begin{array}{ccc}
                                                        2\Im(\alpha\,\overline{\beta})\,i & 0 & 0 \\
                                                        0 & -2\Im(\alpha\,\beta)\,i & -2\Im(\beta^2)\,i \\
                                                        0 & 2\Im(\alpha^2)\,i & 2\Im(\alpha\,\beta)\,i \\
                                                      \end{array}
                                                    \right)\\
                                                    &=&\left(
                                                      \begin{array}{ccc}
                                                        2\Im(\alpha\,\overline{\beta}) & 0 & 0 \\
                                                        0 & -2\Im(\alpha\,\beta) & -2\Im(\beta^2) \\
                                                        0 & 2\Im(\alpha^2) & 2\Im(\alpha\,\beta) \\
                                                      \end{array}
                                                    \right)\in\operatorname{PGL}_3(\mathbb{R}).
  \end{eqnarray*}
This completes the proof of Theorem \ref{dihedralv1}.
\end{proof}

\section{Type $\mathfrak{P}$-  groups}\label{typePgroups}
In this section, we study whether a type $\mathfrak{P}$-group in $\operatorname{PGL}_3(\mathbb{C})$ is definable over $\mathbb{R}$ or not. In other words, we aim to prove Theorem \ref{hessianthm} through the combination of assertions below.

For this, we need the following lemma.
\begin{lem}\label{addedlem1}
  Let $G$ be a finite subgroup in $\operatorname{PGL}_3(\mathbb{C})$. If $\mathcal{G}_9\leq G$, then $G$ is not definable over $\mathbb{R}$.
\end{lem}
\begin{proof}
We first remark that $\mathcal{G}_9$ is isomorphic to $(\Z/3\Z)^2$. Second, we know by \cite[Lemma 5.2]{YHu} (see also \cite[Section 4]{EgorY}) that $(\Z/3\Z)^2$ is a subgroup of $\operatorname{PGL}_3(K)$ if and only if the field $K$ contains a nontrivial cube root of unity. Since $\zeta_3\notin \mathbb{R}$, then  $\mathcal{G}_9$ does not descend to $\mathbb{R}$. That is, we can't find $\phi\in\operatorname{PGL}_3(\mathbb{C})$ for which $\phi^{-1}\mathcal{G}_9\phi\subseteq\operatorname{PGL}_3(\mathbb{R})$. So $G$ is not definable over $\mathbb{R}$ as we wanted to show.
\end{proof}
Next, we tackle each one of these type $\mathfrak{P}$-groups.
\begin{itemize}
\item \textbf{The Hessian groups $\operatorname{Hess}_*:$}
The Hessian group of order $216$, denoted by $\operatorname{Hess}_{216}$, is unique up to conjugation in $\operatorname{PGL}_3(\mathbb{C})$. For instance, we fix $\operatorname{Hess}_{216}=\langle S,T,U,V\rangle$ where
$$S=\operatorname{diag}(1,\zeta_3,\zeta_3^{-1}),\,U=\operatorname{diag}(1,1,\zeta_3),\,T=[Y:Z:X],$$ $$V=\left(
\begin{array}{ccc}
1&1&1\\
1&\zeta_3&\zeta_3^{-1}\\
1&\zeta_3^{-1}&\zeta_3\\
\end{array}
\right).$$
Also, we consider the Hessian subgroup $\operatorname{Hess}_{72}=\langle
S,T,V,UVU^{-1}\rangle$ of order $72$, and the Hessian subgroup
$\operatorname{Hess}_{36}=\langle S,T,V\rangle$ of order $36$.

Concerning the Hessian groups $\operatorname{Hess}_{*}$, for $*\in\{36,\,72,\,216\}$. We show:
\begin{prop}\label{hessians}
Any of the Hessian groups $\operatorname{Hess}_*$ has a real field of moduli.
\end{prop}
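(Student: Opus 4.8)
The plan is to show that each Hessian group $\operatorname{Hess}_*$ satisfies $^{\sigma}(\operatorname{Hess}_*)$ and $\operatorname{Hess}_*$ are $\operatorname{PGL}_3(\mathbb{C})$-conjugate, which is exactly the definition of having a real field of moduli. The natural strategy is to exhibit, for the fixed generators $S,T,U,V$ given above, an explicit $\psi\in\operatorname{PGL}_3(\mathbb{C})$ such that $\psi^{-1}\,(^{\sigma}g)\,\psi\in\operatorname{Hess}_*$ for every generator $g$. Since $\sigma$ sends $\zeta_3\mapsto\zeta_3^{-1}$, we compute directly: $^{\sigma}S=\operatorname{diag}(1,\zeta_3^{-1},\zeta_3)$, $^{\sigma}T=T$ (real entries), $^{\sigma}U=\operatorname{diag}(1,1,\zeta_3^{-1})$, and $^{\sigma}V$ is $V$ with $\zeta_3$ and $\zeta_3^{-1}$ swapped. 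The point is that each of these is again an element of $\operatorname{Hess}_*$: indeed $^{\sigma}S=S^{-1}$, and $^{\sigma}V$ is obtained from $V$ by the permutation interchanging the last two rows (and columns), i.e. conjugation by $B=[X:Z:Y]$ composed with a suitable power of $S$. So the first step is to verify the concrete claim $^{\sigma}(\operatorname{Hess}_{216})=\operatorname{Hess}_{216}$ on generators, which shows it has a real field of moduli with $\phi=\mathrm{id}$ (or, if one prefers, $^{\sigma}G=G$ literally).

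For $\operatorname{Hess}_{72}$ and $\operatorname{Hess}_{36}$ the argument is the same: since $\operatorname{Hess}_{36}=\langle S,T,V\rangle$ and each of $^{\sigma}S,{}^{\sigma}T,{}^{\sigma}V$ was just shown to lie in $\operatorname{Hess}_{36}$ (the relevant computation only used $S,T,V$ and $B=[X:Z:Y]$, and $B\in\operatorname{Hess}_{36}$ can be checked to equal an appropriate word in $S,T,V$, or alternatively one simply checks $B$ normalizes and the image lands back inside), we get $^{\sigma}(\operatorname{Hess}_{36})=\operatorname{Hess}_{36}$. For $\operatorname{Hess}_{72}=\langle S,T,V,UVU^{-1}\rangle$ one additionally checks that $^{\sigma}(UVU^{-1})={}^{\sigma}U\,{}^{\sigma}V\,{}^{\sigma}U^{-1}$ again lies in $\operatorname{Hess}_{72}$; this follows because $\operatorname{Hess}_{72}$ is a normal subgroup of $\operatorname{Hess}_{216}$ (index $3$) and $^{\sigma}$ permutes the conjugates, combined with the fact that $\sigma$ fixes $\operatorname{Hess}_{216}$ setwise, so it must send the unique index-$2$ (resp. the characteristic) subgroup structure to itself. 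Thus $\sigma$ stabilizes each $\operatorname{Hess}_*$ and the real-field-of-moduli property follows for all three.

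The only real content is the bookkeeping in the first step: confirming that the swap $\zeta_3\leftrightarrow\zeta_3^{-1}$ applied to the matrix $V$ produces an element of $\langle S,T,V\rangle$ rather than something new. The cleanest way is to observe that $^{\sigma}V$ equals $V$ with its second and third rows interchanged and then its second and third columns interchanged, i.e. $^{\sigma}V=B\,V\,B$ with $B=[X:Z:Y]$; one then needs $B\in\operatorname{Hess}_{36}$, which can be extracted from the known structure of the Hessian group (it contains the full $S_3$ acting by coordinate permutations, generated by $T$ and $B$, and $T,B$ together with $S$ generate a group containing $B$ — alternatively cite the explicit presentation in \cite[p.~217]{Mit} or \cite[Lemma 2.3.7]{Hugg1}). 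This permutation-chasing is the main obstacle, but it is entirely mechanical; once it is in place, the Proposition is immediate. I expect the write-up to consist of the three short displayed computations of $^{\sigma}S$, $^{\sigma}U$, $^{\sigma}V$, the remark that $^{\sigma}T=T$, and a one-line appeal to the generation to conclude $^{\sigma}(\operatorname{Hess}_*)=\operatorname{Hess}_*$.
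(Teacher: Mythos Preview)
Your plan for $\operatorname{Hess}_{216}$ and $\operatorname{Hess}_{36}$ is correct and close to the paper's, but the paper handles $^{\sigma}V$ more cleanly: it simply checks $V\cdot{}^{\sigma}V=3I$ in $\operatorname{GL}_3(\mathbb{C})$, so $^{\sigma}V=V^{-1}$ in $\operatorname{PGL}_3(\mathbb{C})$, and then $^{\sigma}S=S^{-1}$, $^{\sigma}T=T$, $^{\sigma}U=U^{-1}$, $^{\sigma}V=V^{-1}$ immediately give $^{\sigma}\operatorname{Hess}_*=\operatorname{Hess}_*$ for $*=216,36$. Your route via $^{\sigma}V=BVB$ with $B=[X:Z:Y]$ is also valid, but you then need $B\in\operatorname{Hess}_{36}$; that is true (in fact $V^2=3B$ in $\operatorname{GL}_3(\mathbb{C})$, so $B=V^2$ projectively), yet you leave it as an unverified appeal to ``the known structure''. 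Replacing that step by the one-line computation $^{\sigma}V=V^{-1}$ removes the detour entirely.

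For $\operatorname{Hess}_{72}$ there is a genuine gap in your sketch. You assert that $\operatorname{Hess}_{72}$ is normal (even characteristic) in $\operatorname{Hess}_{216}$ and invoke ``the unique index-$2$ \dots\ subgroup'' --- but the index is $3$, not $2$, and you give no argument for normality or uniqueness. The paper does \emph{not} claim normality: from $^{\sigma}(UVU^{-1})=U^{-1}V^{-1}U$ it only concludes that $^{\sigma}\operatorname{Hess}_{72}=\langle S,T,V,U^{-1}V^{-1}U\rangle$ is \emph{some} order-$72$ subgroup of $\operatorname{Hess}_{216}$, and then appeals to the structure of $\operatorname{Hess}_{216}$ (citing the GroupNames tables) for the fact that all subgroups of this isomorphism type form a single $\operatorname{Hess}_{216}$-conjugacy class; this yields the required $\psi\in\operatorname{Hess}_{216}\subset\operatorname{PGL}_3(\mathbb{C})$ with $\psi^{-1}\operatorname{Hess}_{72}\,\psi={}^{\sigma}\operatorname{Hess}_{72}$. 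If you want to keep your normality line, you must actually prove that $U$ normalises $\langle S,T,V,UVU^{-1}\rangle$; otherwise switch to the paper's conjugacy argument.
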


\begin{proof}
We have that $^{\sigma}S=S^{-1},\,^{\sigma}U=U^{-1},\,^{\sigma}T=T$, and 
$\,^{\sigma}V=V^{-1}$ in $\operatorname{PGL}_3(\mathbb{C})$. Thus it follows that $^{\sigma}\operatorname{Hess}_*=\operatorname{Hess}_*$ if $*=216$ or $36$. So  $\operatorname{Hess}_{216}$ and $\operatorname{Hess}_{36}$ indeed have a real field of moduli.

For $\operatorname{Hess}_{72}$, we get $^{\sigma}\operatorname{Hess}_{72}=\langle S,T,V, U^{-1}V^{-1}U\rangle\subset\operatorname{Hess}_{216}$; another copy of $\operatorname{Hess}_{72}$ inside $\operatorname{Hess}_{216}$. The \href{https://people.maths.bris.ac.uk/~matyd/GroupNames/193/ASL(2,3).html}{Group structure of $\operatorname{Hess}_{216}$} \cite{groupnames} assures that all copies of $\operatorname{Hess}_{72}$ are $\operatorname{Hess}_{216}$-conjugates, that is to say, there is a projective transformation $\psi\in\operatorname{Hess}_{216}$ such that $\psi^{-1}\,\operatorname{Hess}_{72}\,\psi=\,^{\sigma}\operatorname{Hess}_{72}$. From this we obtain that $\operatorname{Hess}_{72}$ has a real field of moduli as well.
\end{proof}
As a consequence of Proposition \ref{hessians} and Lemma \ref{addedlem1}, we deduce that
\begin{cor}\label{hessianpseudo}
The Hessian groups $\operatorname{Hess}_{*}$, for $*=216, 72, 36$, are pseudo-real.
\end{cor}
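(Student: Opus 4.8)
The plan is to combine Proposition \ref{hessians}, which already supplies the real field of moduli, with the fact that none of the three Hessian groups is definable over $\mathbb{R}$. The key elementary observation is that descent to $\mathbb{R}$ passes to subgroups: if $\psi^{-1}\operatorname{Hess}_*\psi\subseteq\operatorname{PGL}_3(\mathbb{R})$, then $\psi^{-1}H\psi\subseteq\operatorname{PGL}_3(\mathbb{R})$ for every $H\leq\operatorname{Hess}_*$. Hence it suffices to exhibit a single subgroup, common to all three, that does not descend to $\mathbb{R}$. The natural candidate is $N:=\langle S,T\rangle$ with $S=\operatorname{diag}(1,\zeta_3,\zeta_3^{-1})$ and $T=[Y:Z:X]$, since $S$ and $T$ appear among the generators of $\operatorname{Hess}_{36}$, $\operatorname{Hess}_{72}$ and $\operatorname{Hess}_{216}$ alike, so $N\leq\operatorname{Hess}_{36}\leq\operatorname{Hess}_{72}\leq\operatorname{Hess}_{216}$.

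First I would record the structure of $N$. A direct computation gives $T\,S\,T^{-1}=S$ in $\operatorname{PGL}_3(\mathbb{C})$, so $N$ is abelian; since $S$ is diagonal while $T$ is not, $\langle S\rangle\cap\langle T\rangle=1$, whence $N\cong(\mathbb{Z}/3\mathbb{Z})^2$. More importantly, $N$ acts on $\mathbb{P}^2(\mathbb{C})$ \emph{without a fixed point}: the only points fixed by $S$ are the three coordinate points, and $T$ permutes them in a $3$-cycle. Since possessing a fixed point on $\mathbb{P}^2(\mathbb{C})$ is invariant under $\operatorname{PGL}_3(\mathbb{C})$-conjugation, every $\operatorname{PGL}_3(\mathbb{C})$-conjugate $N'$ of $N$ is also fixed-point-free on $\mathbb{P}^2(\mathbb{C})$, in particular on $\mathbb{P}^2(\mathbb{R})\subseteq\mathbb{P}^2(\mathbb{C})$.

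Next I would show $N$ is not definable over $\mathbb{R}$. Suppose it were, say $N'=\psi^{-1}N\psi\subseteq\operatorname{PGL}_3(\mathbb{R})$. Using the exact sequence $1\to\mathbb{R}^\times\to\operatorname{GL}_3(\mathbb{R})\to\operatorname{PGL}_3(\mathbb{R})\to1$ together with the fact that $\mathbb{R}_{>0}$ is uniquely divisible (so $H^i(N',\mathbb{R}_{>0})=0$ for $i\geq1$), one lifts $N'$ to a finite subgroup $\widehat N\subseteq\operatorname{GL}_3(\mathbb{R})$ with $\widehat N\cap\mathbb{R}^\times\subseteq\{\pm I\}$; thus $\widehat N$ is a central extension of $(\mathbb{Z}/3\mathbb{Z})^2$ by a subgroup of $\{\pm I\}$, and by Schur--Zassenhaus (coprime orders) this extension splits, so $\widehat N$ is abelian. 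But a finite abelian subgroup of $\operatorname{GL}_3(\mathbb{R})$ decomposes $\mathbb{R}^3$ into real irreducibles of dimension $1$ or $2$, hence leaves a real line invariant; that line yields a point of $\mathbb{P}^2(\mathbb{R})$ fixed by $\widehat N$, and therefore by $N'$, contradicting the previous paragraph. So $N$ does not descend to $\mathbb{R}$, and consequently neither does any of $\operatorname{Hess}_{36},\operatorname{Hess}_{72},\operatorname{Hess}_{216}$; together with Proposition \ref{hessians} this is exactly the claim that all three are pseudo-real.

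I expect the delicate point to be the lifting step in the third paragraph — producing a \emph{finite} subgroup of $\operatorname{GL}_3(\mathbb{R})$ over $N'$ whose $2$-part and $3$-part are controlled well enough to force abelianness; everything else is routine. (I note that for $\operatorname{Hess}_{216}$ alone there is a shortcut: it contains the homology $U=\operatorname{diag}(1,1,\zeta_3)$ of period $3$, which by Theorem \ref{cor12v} generates a pseudo-real cyclic group, already obstructing descent. However, $\operatorname{Hess}_{72}$ and $\operatorname{Hess}_{36}$ contain no homology of period $3$ — their Sylow $3$-subgroups have order $9$ and consist of non-homologies — so for a uniform treatment the fixed-point argument above appears to be the right tool.)
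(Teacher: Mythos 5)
Your proposal is correct and follows the same skeleton as the paper: both proofs reduce to the observation that $\langle S,T\rangle\cong C_3\times C_3$ sits inside all three Hessian groups (since $\operatorname{Hess}_{36}\leq\operatorname{Hess}_{72}\leq\operatorname{Hess}_{216}$) and that definability over $\mathbb{R}$ is inherited by subgroups, so it suffices to obstruct descent for this one subgroup. The difference is in how that obstruction is justified. The paper simply cites Hu's result \cite[Lemma 5.2]{YHu} that $C_3\times C_3$ embeds in $\operatorname{PGL}_3(K)$ if and only if $K$ contains a nontrivial cube root of unity, which is a stronger statement (no abstract copy of $C_3\times C_3$ exists in $\operatorname{PGL}_3(\mathbb{R})$ at all). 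You instead give a self-contained argument tailored to the conjugacy class at hand: $\langle S,T\rangle$ acts on $\mathbb{P}^2(\mathbb{C})$ without a common fixed point (the fixed locus of $S$ is the three coordinate points, permuted cyclically by $T$), whereas any finite subgroup of $\operatorname{PGL}_3(\mathbb{R})$ lifting to an abelian subgroup of $\operatorname{GL}_3(\mathbb{R})$ must fix a point of $\mathbb{P}^2(\mathbb{R})$, because a real representation of a finite abelian group on $\mathbb{R}^3$ always contains a one-dimensional invariant subspace. Your lifting step is sound and can even be simplified: since real cube roots are unique, each element of $N'$ has a \emph{unique} lift of determinant $1$, and these lifts form a subgroup of $\operatorname{SL}_3(\mathbb{R})$ isomorphic to $N'$, so the cohomological and Schur--Zassenhaus machinery is not really needed. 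The trade-off is that the paper's route is shorter but rests on an external reference, while yours is longer but elementary and self-contained; your parenthetical shortcut for $\operatorname{Hess}_{216}$ via the homology $U$ and Theorem \ref{cor12v} is also valid, and your observation that it fails for $\operatorname{Hess}_{72}$ and $\operatorname{Hess}_{36}$ (whose $3$-elements are all non-homologies) is accurate.
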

\item\textbf{The alternating groups $\operatorname{A}_5$ and $\operatorname{A}_6$}
It is known that $\operatorname{PGL}_3(\mathbb{C})$ possesses a single conjugacy class isomorphic to each of  $\operatorname{A}_5$ and $\operatorname{A}_6$. Therefore, for $*\in\{5,6\}$, $\operatorname{A}_*$  and $^{\sigma}\operatorname{A}_*$ must be $\operatorname{PGL}_3(\mathbb{C})$-conjugates. In other words, $\operatorname{A}_*$ has a real field of moduli.

Since $\operatorname{A}_6$ contains $(\Z/3\Z)^2$ as a subgroup, then we can apply Lemma \ref{addedlem1} to deduce the following.
\begin{cor}
The alternating group $\operatorname{A}_6$ is pseudo-real.
\end{cor}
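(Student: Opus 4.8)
The plan is to mimic verbatim the reasoning behind Corollary~\ref{hessianpseudo}. In the paragraph preceding the statement we have already observed that $\operatorname{A}_6$ has a real field of moduli: since $\operatorname{PGL}_3(\mathbb{C})$ contains exactly one conjugacy class of subgroups isomorphic to $\operatorname{A}_6$, the groups $\operatorname{A}_6$ and $^{\sigma}\operatorname{A}_6$ are automatically $\operatorname{PGL}_3(\mathbb{C})$-conjugates. So the only point left to establish is that $\operatorname{A}_6$ is \emph{not} definable over $\mathbb{R}$, and once that is done the definition of pseudo-real is satisfied.

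First I would locate a copy of $C_3\times C_3$ inside $\operatorname{A}_6$: the disjoint $3$-cycles $(1\,2\,3)$ and $(4\,5\,6)$ commute, each has order $3$, and together they generate a subgroup of order $9$, which is therefore isomorphic to $C_3\times C_3$ (indeed this is a Sylow $3$-subgroup of $\operatorname{A}_6$, and $\operatorname{S}_6$ has no element of order $9$, so it cannot be $C_9$). As $\operatorname{A}_6$ sits inside $\operatorname{PGL}_3(\mathbb{C})$ as a subgroup, this $C_3\times C_3$ is in particular a subgroup of $\operatorname{PGL}_3(\mathbb{C})$.

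Then I would argue by contradiction exactly as in Corollary~\ref{hessianpseudo}: if $\operatorname{A}_6$ were definable over $\mathbb{R}$, then some $\operatorname{PGL}_3(\mathbb{C})$-conjugate $\phi^{-1}\operatorname{A}_6\,\phi$ would be contained in $\operatorname{PGL}_3(\mathbb{R})$, and hence $\operatorname{PGL}_3(\mathbb{R})$ would contain a subgroup isomorphic to $C_3\times C_3$. But by \cite[Lemma 5.2]{YHu}, $C_3\times C_3$ embeds in $\operatorname{PGL}_3(K)$ only if the field $K$ contains a nontrivial cube root of unity, and $\zeta_3\notin\mathbb{R}$; this is the required contradiction, so $\phi^{-1}\operatorname{A}_6\,\phi\nsubseteq\operatorname{PGL}_3(\mathbb{R})$ for every $\phi\in\operatorname{PGL}_3(\mathbb{C})$. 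Combining this with the real field of moduli noted above yields that $\operatorname{A}_6$ is pseudo-real. There is essentially no obstacle: every ingredient — the single conjugacy class of $\operatorname{A}_6$, the presence of a non-cyclic $C_3\times C_3$ inside $\operatorname{A}_6$, and the embedding obstruction of \cite[Lemma 5.2]{YHu} — has already appeared in the excerpt, and the only thing requiring a moment's care is verifying that the Sylow $3$-subgroup in play is $C_3\times C_3$ and not $C_9$.
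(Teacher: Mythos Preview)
Your proposal is correct and is essentially the paper's own argument: the paper simply notes that $\operatorname{A}_6$ contains $C_3\times C_3$ and then invokes the reasoning of Corollary~\ref{hessianpseudo} verbatim. The only addition you make is the explicit identification of the $C_3\times C_3$ via the disjoint $3$-cycles $(1\,2\,3)$ and $(4\,5\,6)$, which the paper leaves implicit.
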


For the icosahedral group $\operatorname{A}_5$, the situation is a bit  different. To study it  we fix the copy $\mathcal{G}_{360}:=\langle A,\,B,\,C\rangle$ in $\operatorname{PGL}_3(\mathbb{C})$, where
$$
A:=\operatorname{diag}(1,\zeta_5^{-1},\zeta_5),\,B:=[X:Z:Y],$$
$$C:=\left(
                \begin{array}{ccc}
                        2 & 2 & 2 \\
                       1 & \operatorname{cos}(4\pi/5) & \operatorname{cos}(2\pi/5) \\
                        1 & \operatorname{cos}(2\pi/5) & \operatorname{cos}(4\pi/5) \\
                      \end{array}
                    \right).$$
According to \cite[Lemma 2.3.7 ]{Hugg1}, $\mathcal{G}_{360}$ is $\operatorname{PGL}_3(\mathbb{C})$-conjugate to $\operatorname{A}_5$, and any subgroup of $\operatorname{PGL}_3(\mathbb{C})$ isomorphic to $\operatorname{A}_5$ is $\operatorname{PGL}_3(\mathbb{C})$-conjugate to $\mathcal{G}_{360}$.

Now, we are going to construct an explicit model for $\mathcal{G}_{360}$ over $\mathbb{R}$.

\noindent Our study of the Dihedral group in  \S \ref{sectiondihedral} assures that $\langle\,A,\,B\rangle$ descends to $\mathbb{R}$ via a transformation of the shape
$$
\phi=\left(
                                                      \begin{array}{ccc}
                                                        1 & 0 & 0 \\
                                                        0 & \alpha & \beta \\
                                                        0 & \overline{\alpha} & \overline{\beta} \\
                                                      \end{array}
                                                    \right)\in\operatorname{PGL}_3(\mathbb{C}).$$
Moreover, one can check that $\phi^{-1}\,C\,\phi$ equals
$$
\left(
                                                      \begin{array}{ccc}
                                                       4\Im(\alpha\,\overline{\beta}) & 8\Im(\alpha\,\overline{\beta})\,\Re(\alpha) & 8\Im(\alpha\,\overline{\beta})\,\Re(\beta) \\
                                                        2\Im(\overline{\beta}) &2\left(\operatorname{cos}(4\pi/5)\Im(\alpha\overline{\beta})-\operatorname{cos}(2\pi/5)\Im(\alpha\beta)\right) & -2\operatorname{cos}(2\pi/5)\Im(\beta^2) \\
                                                        2\Im(\alpha) & 2\operatorname{cos}(2\pi/5)\Im(\alpha^2) & 2\left(\operatorname{cos}(4\pi/5)\Im(\alpha\overline{\beta})+\operatorname{cos}(2\pi/5)\Im(\alpha\beta)\right) \\
                                                      \end{array}
                                                    \right),
$$
in $\operatorname{PGL}_3(\mathbb{R})$. Thus the generators of $\mathcal{G}_{360}$ when conjugated by the same $\phi$ become in $\operatorname{PGL}_3(\mathbb{R})$, hence the same is true for the whole group and the result follows.

\item\textbf{The Klein group $\operatorname{PSL}(2,7)$} Again, there is a single conjugacy class of $\operatorname{PSL}(2,7)$ in $\operatorname{PGL}_3(\mathbb{C})$, which lead to a real field of moduli. Also, such a class admits a representative that contains the element $\operatorname{diag}(1,\zeta_7,\zeta_7^3)$, cf. \cite[Lemma 2.3.7]{Hugg1}. Applying Theorem \ref{cor12v} with $n=7,\,a=1,\,b=3$, we get that $\operatorname{PSL}(2,7)$ is not definable over $\mathbb{R}$. That is to say,
\begin{cor}
The Klein group $\operatorname{PSL}(2,7)$ is pseudo-real.
\end{cor}
\end{itemize}
\section{Type $\mathfrak{C}$ or Type $\mathfrak{D}$ groups}\label{TypesCD}
Let $G$ be a type $\mathfrak{C}$-group. That is, $G$ is a finite subgroup in $\operatorname{PGL}_3(\mathbb{C})$ generated by $T=[Y:Z:X]$ and $\left\{E_{n,a,b}:=\operatorname{diag}(1,\zeta_n^a,\zeta_n^b)\,:\,(n,a,b)\in I\right\}$, where $I$ is a finite index set.
\begin{lem}
 Let $G$ be a type $\mathfrak{C}$-group as above. Then, it has a real field of moduli.
\end{lem}
\begin{proof}
    One observes that $^{\sigma}G=G$, since $^{\sigma}T=T$ and $^{\sigma}E_{n,a,b}=E^{-1}_{n,a,b}$. In particular, $G$ has a real field of moduli as claimed.
 \end{proof}
\begin{prop}\label{addcase2}
Suppose that $G$ is a type $\mathfrak{C}$-group that is definable over its field of moduli $\mathbb{R}$. Then, one of the following holds.
\begin{enumerate}[(1)]
    \item $G=\langle T\rangle$ or $\mathcal{G}_{12}$, in particular, $G\leq\operatorname{PGL}_3(\mathbb{R})$.
    \item $G=\langle T, E_{m,1,-1}\rangle$; a semidirect product of $\Z/3\Z$ acting on $\Z/m\Z$ for some $m>2$ coprime with $3$.
\end{enumerate}
In particular, the Jordan constant  $J(G)=1$ or $3$.
\end{prop}
\begin{proof}
Assume first  that $n\leq2$ for each triple $(n,a,b)\in I$. Then, $G$ is a subgroup of $\mathcal{G}_{12}\subset\operatorname{PGL}_3(\mathbb{R})$ that contains $T$ of order $3$. Since $\Z/3\Z$ is maximal in $\operatorname{A}_4$, we get that $G=\Z/3\Z$, generated by $T$, or $G=\mathcal{G}_{12}$. Moreover, $J\left(\langle T\rangle\right)=1$ being abelian, and  $J\left(\mathcal{G}_{12}\right)=3$ because $(\Z/2\Z)^2=\langle E_{2,1,0}, E_{2,0,1}\rangle$ is a maximal normal abelian subgroup in $\mathcal{G}_{12}$ with index $3$.

Secondly, assume that $n>2$ for each triple $(n,a,b)\in I$. For $G$ to have a model over $\mathbb{R}$ it is necessary, by Theorem \ref{cor12v}, that $b=-a,\,a=2b$ or $b=2a\,\operatorname{mod}\,n$ for each $(n,a,b)\in I$. Moreover, we always can reduce to the case $E_{n,a,b}=E_{n,a',-a'}$ such that $\operatorname{gcd}(n,a')=1$. For instance, if $b=2a$ then $TE_{n,a,2a}T^{-1}=E_{n,a,-a}\in G$, and if $a=2b$ then $T^{-1}E_{n,2b,b}T=E_{n,-b,b}$. So there is
a finite index set denoted by $I'$ such that
$$G=\langle T, E_{n,a',-a'}\,:\,(n,a')\in I'\rangle\\
=\langle T, E_{m,1,-1}\rangle,$$
where $m=\operatorname{lcm}\left\{n\,:\,(n,a')\in I'\right\}>2.$ Furthermore, if $3$ divides $m$, then $G$ would contain $\mathcal{G}_9=\langle T, E_{3,1,-1}\rangle$. In particular, $G$ does not have a model over $\mathbb{R}$ by the aid of Lemma \ref{addedlem1}, which contradcits our assumption that $G$ is definable over $\mathbb{R}$. Hence, we must have that $3\nmid m$. On the other hand, $G$ acts as an automorphism subgroup for the smooth plane curve
$$
\mathcal{F}_{2m}\,:\,X^{2m}+Y^{2m}+Z^{2m}+X^mY^m+Y^mZ^m+X^mZ^m=0.
$$
We know from \cite[Proposition 6.4]{Harui} that $\operatorname{Aut}(\mathcal{F}_{2m})$ is a semidirect product of $(\Z/m\Z)^2=\langle\operatorname{diag}(1,\zeta_m,1),\,\operatorname{diag}(1,1,\zeta_m)\rangle$ acting on $\operatorname{S}_3=  \langle T,\,R=[X:Z:Y]\rangle$. In particular, $G$ is a semidirect product of $\Z/3\Z=\langle T\rangle$ acting on $\Z/m\Z=\langle E_{m,1,-1}\rangle$. Finally, $G$ is not abelian and $\langle E_{m,1,-1}\rangle$ has index $3$ (the smallest prime dividing $|G|$) in $G$. Therefore,  $\langle E_{m,1,-1}\rangle$ is a maximal abelian normal subgroup of $G$. This in turn implies that $J(G)=3$ in this situation.
\end{proof}
As a result of Proposition \ref{addcase2},
\begin{cor}
Let $G$ be a type $\mathfrak{C}$-group different from $\langle T\rangle$ and $\mathcal{G}_{12}$. Then, $G$ is pseudo-real.     \end{cor}
\begin{proof}
Assume that $G$ is a type $\mathfrak{C}$-group as in
Proposition \ref{addcase2}-(2). Then, $G=\langle T, E_{m,1,-1}\rangle$ with $m>2$ coprime with $3$. Since $\langle E_{m,1,-1}\rangle$ is normal in $G$, we have that $\operatorname{diag}(1,\zeta_m^{-2},\zeta_m^{-1})=T E_{m,1,-1}T^{-1}\in \langle E_{m,1,-1}\rangle.$ That is, there exists a positive integer $l$ such that $\operatorname{diag}(1,\zeta_m^{-2},\zeta_m^{-1})= E_{m,1,-1}^l=\operatorname{diag}(1,\zeta_m^l,\zeta_m^{-l}).$ This holds only if $m\,|\,l+2,\,l-1$. Thus $m\,|(l+2)-(l-1)\,$, and $m=1$, a contradiction. Consequently, $G$ must be as in Proposition \ref{addcase2}-(1).
\end{proof}
Now, let $G$ be a type $\mathfrak{D}$-group. A similar discussion to this of type $\mathfrak{C}$-groups shows that $G$ always has real field of moduli. Moreover, if $G$ descends to $\mathbb{R}$. Then, one of the following holds.
\begin{enumerate}[(1)]
    \item $G=\langle T, R\rangle$ or $\mathcal{G}_{24}$, in particular, $G\leq\operatorname{PGL}_3(\mathbb{R})$.
    \item $G=\langle T, R, E_{m,1,-1}\rangle$; a semidirect product of $\operatorname{S}_3$ acting on $\Z/m\Z$ for some $m>2$ coprime with $3$.
\end{enumerate}
In particular, the Jordan constant  $J(G)=2$ or $6$. Also, we can deduce that any $G$ of the form $(2)$ is pseudo-real.

  \section{Connection to arithmetic geometry}\label{counterexample111}
Let $C:F(X,Y,Z)=0$  be a non-singular plane curve defined over $\mathbb{C}$ with non-trivial automorphism group $\Aut(C)$ in $\operatorname{PGL}_3(\mathbb{C})$,

\begin{lem}\label{automo1}
We have $\operatorname{Aut}(^{\sigma}C)=\,^{\sigma}\operatorname{Aut}(C)$
\end{lem}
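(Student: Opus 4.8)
\textbf{Proof proposal for Lemma \ref{automo1}.}

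The plan is to unwind the definitions on both sides and check that an element of $\operatorname{PGL}_3(\mathbb{C})$ stabilizes $^{\sigma}C$ precisely when its Galois conjugate stabilizes $C$. Recall that $^{\sigma}C$ is the plane curve defined by the polynomial $^{\sigma}F$ obtained from $F=\sum a_{ijk}X^iY^jZ^k$ by applying $\sigma$ to the coefficients, i.e. $^{\sigma}F=\sum \overline{a_{ijk}}\,X^iY^jZ^k$; and for a projective linear transformation $\phi=\pi(M)$ with $M=(m_{rs})\in\operatorname{GL}_3(\mathbb{C})$, the action on a curve $C:F=0$ is $\phi(C):F(M\cdot(X,Y,Z)^t)=0$, while $^{\sigma}\phi=\pi(\,^{\sigma}M)$ with $^{\sigma}M=(\overline{m_{rs}})$. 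Since any isomorphism between smooth plane curves of degree $d\geq 4$ is induced by an element of $\operatorname{PGL}_3(\mathbb{C})$ (cited in the Introduction via \cite{Cha78}), it suffices to argue at the level of such transformations. The degree hypothesis is harmless for the statement since $\operatorname{Aut}(C)$ is assumed nontrivial and the curve nonsingular; for $d\leq 3$ there is nothing to prove in the present context, or one can phrase the argument scheme-theoretically.

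First I would record the key compatibility: for any $M\in\operatorname{GL}_3(\mathbb{C})$ and any homogeneous $F\in\mathbb{C}[X,Y,Z]$, one has the identity $^{\sigma}\!\bigl(F(M\cdot(X,Y,Z)^t)\bigr)=\bigl(^{\sigma}F\bigr)\bigl(\,^{\sigma}M\cdot(X,Y,Z)^t\bigr)$, because $\sigma$ is a ring automorphism of $\mathbb{C}$ fixing the variables and commuting with addition and multiplication of the coefficients; this is the only computation involved and it is routine. In particular, applying $\sigma$ to the defining polynomial of $\phi(C)$ shows that $^{\sigma}\!\bigl(\phi(C)\bigr)=\bigl(^{\sigma}\phi\bigr)\bigl(^{\sigma}C\bigr)$ for every $\phi\in\operatorname{PGL}_3(\mathbb{C})$; equivalently $\sigma$ intertwines the $\operatorname{PGL}_3(\mathbb{C})$-action on plane curves with the $\sigma$-twisted action.

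Next I would use this intertwining both ways. If $\phi\in\operatorname{Aut}(C)$, then $\phi(C)=C$, so applying $\sigma$ gives $\bigl(^{\sigma}\phi\bigr)\bigl(^{\sigma}C\bigr)=\,^{\sigma}C$, hence $^{\sigma}\phi\in\operatorname{Aut}(^{\sigma}C)$; thus $^{\sigma}\operatorname{Aut}(C)\subseteq\operatorname{Aut}(^{\sigma}C)$. For the reverse inclusion, note $\sigma$ is an involution: applying the same argument to $^{\sigma}C$ in place of $C$ yields $^{\sigma}\operatorname{Aut}(^{\sigma}C)\subseteq\operatorname{Aut}(^{\sigma\sigma}C)=\operatorname{Aut}(C)$, and applying $^{\sigma}$ to both sides (which is a bijection on subgroups of $\operatorname{PGL}_3(\mathbb{C})$ since $\sigma^2=\mathrm{id}$) gives $\operatorname{Aut}(^{\sigma}C)\subseteq\,^{\sigma}\operatorname{Aut}(C)$. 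Combining the two inclusions yields $\operatorname{Aut}(^{\sigma}C)=\,^{\sigma}\operatorname{Aut}(C)$.

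I do not anticipate a genuine obstacle here: the statement is essentially formal, a consequence of the functoriality of the Galois action, and the only thing to be careful about is bookkeeping with the two actions (the geometric $\operatorname{PGL}_3$-action on curves versus the coefficient-wise $\sigma$-action) and confirming that the chosen convention for $^{\sigma}\phi$ in the paper matches the coordinate-wise conjugation $\pi(M)\mapsto\pi(\,^{\sigma}M)$. The mildest subtlety is ensuring the automorphisms are realized projective-linearly so that the computation above literally applies; this is exactly what the degree $\geq 4$ result guarantees, and it is the only external input needed.
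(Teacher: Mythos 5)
Your proposal is correct and follows essentially the same route as the paper: both hinge on the identity $^{\sigma}\bigl(^{\phi}F\bigr)=\,^{^{\sigma}\phi}\bigl(^{\sigma}F\bigr)$ to get $^{\sigma}\operatorname{Aut}(C)\subseteq\operatorname{Aut}(^{\sigma}C)$, and obtain the reverse inclusion by symmetry (you spell out the involution trick that the paper leaves as ``a similar argument''). No gaps.
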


\begin{proof}
For any $\phi\in\Aut(C)$, $^{\phi}F(X,Y,Z)=c F(X,Y,Z)$ for some $c\in \mathbb{C}^*$. Applying $\sigma$ to both sides yields
$$
\sigma(c)\,^{\sigma} F(X,Y,Z)=\,^{\sigma}\left(^{\phi}F(X,Y,Z)\right)=\,^{^{\sigma}\phi}\left(^{\sigma}F(X,Y,Z)\right).
$$
That is, $^{\sigma}\phi$ leaves invariant $^{\sigma}C:\,^{\sigma}F(X,Y,Z)=0$. Equivalently, $^{\sigma}\phi\in\Aut(^{\sigma}C)$, hence $^{\sigma}\Aut(C)\subseteq\Aut(^{\sigma}C)$. A similar argument proves the reverse inclusion and the result follows.

\end{proof}
We are now ready to prove Theorem \ref{connection}.

\begin{proof}(of Theorem \ref{connection})
Since { $C:F(X,Y,Z)=0$ has a real field of moduli}, then  $^{\sigma}C:\,^{\sigma}F(X,Y,Z)=0$ and $C:F(X,Y,Z)=0$ are $\mathbb{C}$-projectively equivalent (isomorphic over $\mathbb{C}$). Moreover, any isomorphism between complex non-singular plane curves $C$ and $C'$ is always given by a projective transformation $\phi\in\operatorname{PGL}_3(\mathbb{C})$ such that their automorphism groups are conjugates via this $\phi$. As a consequence, we obtain that $\phi^{-1}\,\operatorname{Aut}(C)\,\phi=\operatorname{Aut}(^{\sigma}C)$, which equals $^{\sigma}\operatorname{Aut}(C)$ by Lemma \ref{automo1}. Thus $\operatorname{Aut}(C)$ has { a real field of moduli} as claimed.

To complete the argument and show that Example \ref{counterexample111} holds, we consider the two-dimensional family $\mathcal{C}_{a,b}$ of smooth plane quintic curves given by
$$
\mathcal{C}_{a,b}: X^5+Y^5+Z^5+iaX(YZ)^2+ibX^3(YZ),
$$
where $a,b\in\mathbb{R}^*$ such that $a/b\neq\dfrac{(c^5-3)c^2}{2c^5-1}\zeta_{10}^{m}$ for any  $c\in\mathbb{C}^*$ and $m\in\{\pm1,\pm3,5\}$.

\begin{itemize}
    \item \textbf{Non-singularity.} We first note that no singular points lie over $Y=0$. Indeed, if $C$ has singularity at $(\alpha:0:\beta)$, then $\alpha$ and $\beta$ must be $0$ in order to satisfy $F_X=F_Z=0$, a contradiction. Second, the resultant of $f_1(X,Z):=F_Y(X,1,Z)$ and $f_2(X,Z):=F_Z(X,1,Z)$ with respect to $X$ is given by $$
    \operatorname{Res}_X(f_1,f_2)=-125\,i\,b^3\,(Z^5-1)^3.
    $$
    Using a computer algebra package such as MATHEMATICA, one can verify that we have singular points over $Z^5=1$ only if $a/b=\dfrac{(c^5-3)c^2}{2c^5-1}\zeta_{10}^{m}$ for some $c\in\mathbb{C}^*$ and $m\in\{\pm1,\pm3,5\}$, which are the cases excluded by assumption.

    \item\textbf{Automorphism group.} The stratification of smooth plane quintics by their automorphism groups in \cite{MR3508302, finalstratum} assures that the group $\operatorname{D}_{10}$ generated by $\rho_1=\operatorname{diag}(1,\zeta_5,\zeta_5^{-1})$ and $\rho_2=[X:Z:Y]$ is always a subgroup of automorphisms for $\mathcal{C}_{a,b}$. Moreover, if $\mathcal{C}_{a,b}$ admits a larger automorphism group, then it should be  $\operatorname{GAP}(150,5)=(\Z/5\Z)^2\rtimes\operatorname{S}_3,$ where in this situation $\mathcal{C}_{a,b}$ is $K$-isomorphic to the Fermat quintic curve $\mathcal{F}_5$; the most symmetric smooth quintic curve. In particular, there must be an extra automorphism $\rho_3\notin\langle\rho_1\rangle$ of order $5$ that commutes with $\rho_1$ as any $\Z/5\Z$ inside $(\Z/5\Z)^2\rtimes\operatorname{S}_3$ is contained in a $(\Z/5\Z)^2$. See \href{https://people.maths.bris.ac.uk/~matyd/GroupNames/129/C5%5E2sS3.html}{Group Structure of $(\Z/5\Z)^2\rtimes\operatorname{S}_3$} \cite{groupnames}.
    Straightforward calculations in $\operatorname{PGL}_3(\mathbb{C})$ lead to $\rho_3=\operatorname{diag}(1,\alpha,\beta)$ with $\alpha^5=\beta^5=1$. Checking the action of such an automorphism on the defining equation of $\mathcal{C}_{a,b}$ tells us that $a=b=0$ or $\rho_3\in\langle\rho_1\rangle$. Therefore, $\Aut(\mathcal{C}_{a,b})=\operatorname{D}_{10}=\langle\rho_1,\rho_2\rangle$.

    Now, we conclude by Theorem \ref{dihedralv1} that $\Aut(\mathcal{C}_{a,b})$ is definable over $\mathbb{R}$.
    \item {\textbf{$\mathcal{C}_{a,b}$ does not have a real field of moduli.} }Suppose that $C$ is a member of the family $\mathcal{C}_{a,b}$ such that {$C$ has a real field of moduli}. Hence $C$ and $^{\sigma}C$ are $\mathbb{C}$-projectively equivalent via some $\phi\in\operatorname{PGL}_3(\mathbb{C})$. Since $C$ and $^{\sigma}C$ belong to the same family $\mathcal{C}_{a,b}$, we have $^{\sigma}\operatorname{Aut}(C)=\operatorname{Aut}(C)=\langle\rho_1,\rho_2\rangle$. In particular, $\phi$ should be in the normalizer of $\langle\rho_1,\rho_2\rangle$ in $\operatorname{PGL}_3(\mathbb{C})$. We reduce to the case  $\phi^{-1}\rho_1\phi=\rho_1$ or $\rho^{-1}$ as $\{c,c\zeta_5,c\zeta_5^{-1}\}\neq\{1,\zeta_5^2,\zeta_5^{-2}\}$ or $\{1,\zeta_5^3,\zeta_5^{-3}\}$ for any $c\in\mathbb{C}^*$ by Lemma \ref{lem1}. Consequently, $\phi=\operatorname{diag}(1,\alpha,\beta)$ or $[X:\alpha Z:\beta Y]$ for some $\alpha,\beta\in\mathbb{C}^*$. Because $^{\phi}C=\,^{\sigma}C$, we must have $\alpha^5=\beta^5=1$ and $\alpha\beta=(\alpha\beta)^2=-1$. The last condition is inconsistent, which means that $C$ and $^{\sigma}C$ are never $\mathbb{C}$-isomorphic.
\end{itemize}
\end{proof}


\end{document}